\title{A fast alternating projection method for complex frequency estimation.}
\author[*]{Fredrik Andersson}
\author[$\dagger$]{Marcus Carlsson}
\author[**]{Per-Anders Ivert}
\affil[*]{Centre for Mathematical Sciences, Lund University, Box 118, SE-22100, Lund, Sweden \authorcr {\tt fa@maths.lth.se}, phone~+46462220912, fax~+46462224010}
\affil[$\dagger$]{ Departamento de Matema'ticas, Universidad de Santiago de Chile. \authorcr Avenida
Alameda Libertador Bernardo O'Higgins 3363. Estación Central. Santiago. Chile \authorcr {\tt marcus.carlsson@usach.cl}, phone~+56975829858}
\affil[**]{Dag Hammarskjölds väg 5i, SE-224 64 Lund, Sweden  \authorcr {\tt pa.ivert@gmail.com}, phone~+4646131077}
\begin{document}

\maketitle
\thispagestyle{empty}
\let\oldthefootnote\thefootnote
\renewcommand{\thefootnote}{\fnsymbol{footnote}}
\let\thefootnote\oldthefootnote

\def\P{{\mathsf P}}
\def\B{{\mathcal B}}
\def\O{{\mathcal O}}
\def\K{{\mathcal K}}
\def\H{{\mathcal H}}
\def\A{{\mathcal A}}
\def\X{{\mathcal X}}
\def\G{{\mathcal G}}
\def\n{{\mathcal N}}
\def\r{{\mathcal R}}
\def\I{{\mathcal I}}
\def\N{{\mathbb N}}
\def\m{{\mathbb M}}
\def\C{{\mathbb C}}
\def\T{{\mathbb T}}
\def\R{{\mathbb R}}
\def\Z{{\mathbb Z}}
\def\d{{\mathcal D}}
\def\D{{\mathbb D}}
\def\M{{\mathcal M}}
\def\L{{\mathcal L}}
\def\e{{\mathcal E}}
\def\J{{\mathcal J}}
\def\f{{\mathcal F}}
\def\z{{\mathcal Z}}
\def\s{{\mathcal S}}
\def\c{{\mathcal C}}
\def\U{{\mathcal U}}
\def\t{{\mathcal T}}

\newcommand{\Rank}{\mathsf{Rank}~ }
\newcommand{\Ran}{\mathsf{Ran}~ }
\newcommand{\dist}{\mathsf{dist} }

\renewcommand{\H}{\mathcal{H}}
\newcommand{\argmin}[1]{\underset{#1}{\mathrm{argmin}}}
\newtheorem{theorem}{Theorem}
\newtheorem{corollary}{Corollary}

\newtheorem{lemma}{Lemma}
\newtheorem{definition}{Definition}
\newtheorem{remark}{Remark}
\newtheorem{proposition}{Proposition}
\newtheorem{algorithm}{Algorithm}
\newenvironment{proof}{\par\noindent\textbf{Proof:}}
\makeatletter
\def\endexempel{\hfill\qed\@endtheorem}
\makeatother

\begin{abstract}
The problem of approximating a sampled function using sums of a fixed number of complex exponentials is considered. We use alternating projections between fixed rank matrices and Hankel matrices to obtain such an approximation. Convergence, convergence rates and error estimates for this technique are proven, and fast algorithms are developed. We compare the numerical results obtain with the MUSIC and ESPRIT methods.
\end{abstract}

\section{Introduction}
The present paper is devoted to the problem of approximating a sampled function with a sum of a given number of complex exponentials. Our approach is based on the fact that if such a sum is used as a generating function for a Hankel matrix, then that Hankel matrix will (generically) be of rank $k$. Using this fact, we develop a method for the detection of complex frequencies from a signal by alternating projections: we project the corresponding Hankel matrix onto the class of symmetric rank $k$-matrices, project the projection on the class of Hankel matrices, and so on. By a {\em complex frequency}, we refer to the coefficient $\zeta\in\mathbb{C}$ in an exponential of the
form $t\mapsto e^{\zeta t}$. 

There are several alternative techniques for the estimation of (complex) frequencies from a signal. Two of the most commonly used ones are
multiple signal classification (MUSIC) \cite{Schmidt,Bienvenu} and estimation using rotational invariance (ESPRIT) \cite{ESPRIT}. The
MUSIC method is a generalization of the Pisarenko method \cite{Pisarenko}. Recently, complex frequency estimation has been
used in the construction of close to optimal quadratures, for instance for bandlimited functions \cite{Beylkin_Monzon_2005}. This
work is related to the work of Adamjan, Arov and Krein \cite{AAK}, and the algorithms described in \cite{Beylkin_Monzon_2005} have been
investigated in more detail in \cite{JAT}.

The technique of alternating projections is generally described as follows: Given two manifolds $\M_1, \M_2 \subset \mathcal{K}$ (where $\mathcal{K}$ is some Hilbert space) and some point $x_0 \in \mathcal{K}$, find a point $x\in \M_1 \cap \M_2$ that is close to $x_0$, by projecting alternately onto $\M_1$ and $\M_2$, respectively. It was proven by von Neumann \cite{vonNeumann} that if $\M_1$ and $\M_2$ are affine linear subspaces, then the sequence of alternating projections
$$\pi_1(x_0), \pi_2(\pi_1(x_0)), \pi_1(\pi_2(\pi_1(x_0))), \dots $$
converges to an optimal solution $x\in \M_1 \cap \M_2$, i.e., one that minimizes $\|x - x_0\|$.

The extension to the case where $\M_1$ and $\M_2$ are convex sets has been extensively treated for a number of applications, cf. \cite{Bauschke93, Bauschke96} and the references therein. Another generalization was given in \cite{lewis_malick}, where the convergence of the alternating projection scheme was proven for the case where, loosely speaking, the tangent spaces of $\mathcal{M}_{1}$ and $\mathcal{M}_{2}$ together span $\mathcal{K}$. Note that only convergence to some point in $\M_1 \cap\M_2$ can be proven, but that this point is not necessarily the point in $\M_1 \cap \M_2$ that is closest to $x_0$.

Moreover, neither of the cases above apply to the case which we are interested in, as the space of rank $k$-matrices is not convex, and the spanning condition is typically far from satisfied. In \cite{altprojection}, convergence of alternating projections between two manifolds is proved under much milder conditions than the ones given in \cite{lewis_malick}. In this paper we prove that these conditions are generically satisfied in our case; complex symmetric rank $k$-matrices and Hankel matrices. Moreover, through the framework of \cite{altprojection} we can provide estimates for how far away from the initial (sampled) function the approximating $k$-term complex exponential sum will be.

The idea of using alternating projections for frequency estimation has appeared in a number of different settings. The method of alternating projection is commonly referred to as Cadzows method in the signal processing community. In \cite{cadzow}, Zangwill's global convergence theorem is used to prove convergence for algorithms with alternately projects onto (possibly more than two) manifolds. However, Zangwill's theorem only provides the existence of a convergent subsequence, and the results in \cite{cadzow} do not give any information on whether or not the point of convergence is close to the original one, cf. \cite{structured_low_rank}. In the paper \cite{cadzow}, several applications are mentioned; one of them is the projection between finite rank matrices and Toeplitz matrices. Toeplitz matrices appear in the estimation of exponentials by using infinite measurement (or expected value) of autocorrelation matrices. For an (infinitely dense) sampling of a function consisting of $k$ complex exponentials, it is possible to form a Toeplitz matrix from which the $k$ frequencies can be recovered. It is worth mentioning that for a finite sampling of a function with $k$ complex frequencies, the resulting autocorreletion function will not have a Toeplitz structure, and hence the frequencies can not be exactly recovered with this method, even in the absence of noise. A survey of problems of approximations using a combination of structured matrices and low-rank matrices is given in \cite{structured_low_rank_survey}. Alternating projections is mentioned as one of the numerical methods for finding approximate solutions.

The use of alternating projections (Cadzow's method) between Hankel and low-rank matrices has appeared several times in the signal processing literature \cite{Liu_altproj_hankel, Lu_altproj_hankel, Prabhu_altproj_hankel}. The approaches differ in the way the complex frequencies are estimated, once the alternative projection method has converged.

In this paper we develop fast methods for the projection steps. We make use of the fact that multiplication by a Hankel matrix, as well as the projection of low rank matrices onto Hankel matrices, can be computed in a fast manner by the use of FFT. For the projection onto low-rank representations, we will use a customized complex symmetric version of the Lanzcos algorithm.\\

Finally, we consider the approximation by exponentials for a particular class of weighted spaces -- including (approximate) Gaussian weights. Let $w$ be a nonnegative function on $\R$ with support $[-1,1]$ and let
\begin{equation}\label{MT1}\omega=w*w.\end{equation}
Let $L^2(\omega)$ denote the set of functions for which $\|f\|_\omega^2=\int_{-2}^2 |f(t)|^2\omega(t)~dt<\infty.$ Given
$f\in L^2(\omega)$ and $k\in\mathbb{N}$, we are interested in computationally efficient methods for finding the best (or close to
best) approximation of $f$ by functions of the form $\sum_{j=1}^k c_j e^{\zeta_j t}$. In this paper we develop a theory for finite
sequences rather than functions on a continuum. Using techniques similar to those developed in \cite{JAT}, it seems to be possible to develop a similar technique for the approximation of functions on a continuum by a finite number of complex exponentials.

\section{Preliminaries}

In section \ref{s1} we give the necessary tools for projection onto matrices of a certain rank and set up the spaces we will work
with. In section \ref{s2} we describe how to go from a Hankel matrix to its symbol and back, in these spaces.

\subsection{Takagi factorization and the Eckart-Young theorem}\label{s1}
We use the notation $\m_{M,N}$ to denote the Hilbert space of $M\times N$ matrices with complex entries, equipped with the
Frobenius norm, given by
\begin{equation}
\|A\|^{2}=\sum_{j=1}^{M}\sum_{l=1}^{N}|A(j,l)|^2.
\end{equation}

Complex symmetric matrices satisfy the symmetry condition $A=A^T$, which is different from the usual (Hermitian)
self-adjointness condition $A=A^\ast$. Similarly to real symmetric matrices, which are always diagonalizable, complex symmetric
matrices can be decomposed as
\begin{equation*}
A=\sum_{m=1}^{N} s_m \overline{u_m}u_m^{*}, \quad
s_m\in \R^+, \quad u_m \in \C^{N},
\end{equation*}
where the vectors $\{u_m\}_m$ are mutually orthogonal. (As usual, elements of $\C^{N}$ are identified by column matrices, and $u^{*}$ is the adjoint, i.e. the transpose of the complex conjugate of $u$.)
This decomposition of $A$ is called a \emph{Takagi} factorization. Note that in contrast to the
Hermitian case, the numbers $s_m$ are nonnegative. Moreover, the vectors $u_m$
satisfy the relation
\begin{equation}\label{eq:con}
Au_{m}=s_{m}\overline{u_m}.
\end{equation}
In \cite{Horn}, the vectors $u_m$ are referred to as \emph{con-eigenvectors} and the positive numbers $s_m$ are referred
to as \emph{con-eigenvalues}. However, the con-eigenvectors are simply singular vectors (obtained from the Singular Value
Decomposition), and the con-eigenvalues are the singular values. This is seen by noting that
$$s^{2}_{m}u_{m}=A^{*}Au_{m}.$$
The converse is not true, since it is easily seen that e.g. $iu_{m}$
fails to be a con-eigenvector but is still a singular vector. However, in the case where the $s_m$'s are distinct and
$(u_m)_{m=1}^{N}$ is any basis of singular vectors, then one can choose $\theta_m\in [0,2\pi)$, such that
$(e^{i\theta_m}u_m)_{m=1}^{N}$ are con-eigenvectors. For the purposes of this
paper, we are only interested in the zeroes of the corresponding polynomials, and hence the $\theta_m$'s have no importance, but it
will be computationally more convenient to extract the con-eigenvectors, and we have thus chosen to use this terminology.

We recall the Eckart-Young theorem (see e.g. \cite[p 205]{Horn}, \cite{eckart_young}),
(usually stated using the singular vectors):
\begin{theorem}\label{Takagi_Eckart_Young}
Let $A\in \m_{N,N}$ be a complex symmetric matrix with distinct
con-eigenvalues. Given a positive integer $k\le N$, the best
rank $k$ approximation of $A$ (in $\m_{N,N}$) is given by
\begin{equation} \label{rank_n_rep}
\sum_{m=1}^{k} s_m \overline{u_m}u_m^{*},
\end{equation}
where $s_m$ and $u_m$ are the (decreasingly ordered) con-eigenvalues and con-eigenvectors of $A$, respectively.
\end{theorem}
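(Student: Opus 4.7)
The plan is to reduce the statement to the classical Eckart--Young theorem by recognizing the Takagi factorization as a special instance of the singular value decomposition. The orthonormality of $\{u_m\}_{m=1}^{N}$ already gives the hard part of an SVD on the right; once we check the corresponding properties on the left, the result drops out of the standard theorem (see, e.g., \cite[p 205]{Horn}).

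The first step is to verify that
\[
A=\sum_{m=1}^{N} s_m \,\overline{u_m}\,u_m^{*}
\]
is genuinely an SVD of $A$, viewed as an element of $\m_{N,N}$. For this, I would observe that complex conjugation preserves inner products up to conjugation, so $\{\overline{u_m}\}_{m=1}^{N}$ is again orthonormal. Combined with the orthonormality of $\{u_m\}$ and the nonnegativity of the $s_m$, this identifies $\{u_m\}$ as right singular vectors, $\{\overline{u_m}\}$ as left singular vectors, and $\{s_m\}$ as singular values. The con-eigenvector relation \eqref{eq:con}, $Au_m=s_m\overline{u_m}$, is precisely the SVD defining equation in this identification, so there is nothing more to check here.

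The second step is to invoke the classical Eckart--Young theorem for arbitrary (not necessarily symmetric) matrices in $\m_{N,N}$ with respect to the Frobenius norm: a best rank $k$ approximation is obtained by truncating any SVD after the $k$ largest singular values. Applied to the SVD from the previous step (with the $s_m$ listed in decreasing order), this gives $\sum_{m=1}^{k} s_m \,\overline{u_m}\,u_m^{*}$ as a best rank $k$ approximation of $A$ in all of $\m_{N,N}$, which is the claim. No symmetry-preserving argument is required, because the truncated Takagi sum happens to already be symmetric.

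The hypothesis that the con-eigenvalues are distinct plays only an ancillary role here: it ensures that the truncated SVD is unique up to unimodular phases on the singular vectors and, as noted in the paragraph preceding the theorem, that each such singular vector can be phase-rotated into a genuine con-eigenvector. So the main (and essentially only) obstacle is the small bookkeeping lemma that conjugating an orthonormal basis produces an orthonormal basis and that \eqref{eq:con} is the SVD equation in disguise; once this is in hand, the theorem is immediate from Eckart--Young.
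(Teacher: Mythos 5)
Your proof is correct and matches the paper's intent: the paper does not give a separate argument but simply "recalls" the Eckart--Young theorem (citing Horn and Eckart--Young), noting earlier that con-eigenvectors and con-eigenvalues are just singular vectors and singular values, so the Takagi truncation is the usual SVD truncation in disguise --- which is exactly the reduction you carry out.
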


The above theorem can clearly be used to project a given matrix onto the closest rank $k$ matrix (with respect to the Frobenius norm).
We will also make use of approximations in weighted spaces. Given a positive weight $w\in \mathbb{R}^{N}$, we denote by $\m^w_{N,N}$ the Hilbert space of matrices with the weighted Frobenius norm, given by
\begin{equation*}
\|A\|^{2}_{w} = \sum_{j,k=1}^{N}w(j)|A(j,k)|^2 w(k)=\|\mathrm{diag}(\sqrt{w})\,A\,\mathrm{diag}(\sqrt{w})\|^{2}.
\end{equation*}

\begin{theorem} \label{Thm_weight_app}
Let $A\in \m^w_{N,N}$, and let $s_m$ and $q_m$ denote con-eigenvalues and con-eigenvectors of $B=\mathrm{diag}(\sqrt{w}) ~ A ~ \mathrm{diag}(\sqrt{w})$. Then the best rank $k$-approximation of $A$ (in $\m^w_{N,N}$) is given by
$$\sum_{m=1}^{k} s_m \overline{u_m} u_m^{*},$$
where $u_m(l)=q_m(l)/\sqrt{w(l)}$, $1\le l \le N$.
\end{theorem}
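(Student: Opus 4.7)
The plan is to reduce Theorem \ref{Thm_weight_app} to the unweighted case (Theorem \ref{Takagi_Eckart_Young}) by a change of variables. Write $D=\mathrm{diag}(\sqrt{w})$. Since $w$ is strictly positive, $D$ is invertible with $D^{-1}=\mathrm{diag}(1/\sqrt{w})$, and $D$ is real, symmetric and diagonal, so $\overline{D}=D=D^{T}=D^{*}$. By the definition of the weighted norm we have the isometry
\begin{equation*}
\|A\|_{w}=\|DAD\|, \qquad A\in \m^w_{N,N},
\end{equation*}
between $\m^w_{N,N}$ and the unweighted $\m_{N,N}$.

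First I would set $B=DAD$ and observe that if $A=A^{T}$ then $B=DA^{T}D=B^{T}$, so $B$ is complex symmetric and Takagi factorization applies (one should also assume, as in Theorem \ref{Takagi_Eckart_Young}, that the con-eigenvalues of $B$ are distinct). Next, I would note that the map $C\mapsto \tilde C=DCD$ is a bijection of $\m_{N,N}$ onto itself which preserves rank, since $D$ is invertible. Consequently
\begin{equation*}
\min_{\mathrm{rank}(C)\le k}\|A-C\|_{w}^{2}=\min_{\mathrm{rank}(C)\le k}\|B-DCD\|^{2}=\min_{\mathrm{rank}(\tilde C)\le k}\|B-\tilde C\|^{2}.
\end{equation*}
By Theorem \ref{Takagi_Eckart_Young}, the right-hand minimum is attained at $\tilde C=\sum_{m=1}^{k} s_m \overline{q_m}q_m^{*}$.

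Finally, I would undo the change of variables: the optimal $C$ is $D^{-1}\tilde C D^{-1}$. Since $D$ is real, $D^{-1}\overline{q_m}=\overline{D^{-1}q_m}$, so setting $u_m=D^{-1}q_m$ (which is exactly the componentwise formula $u_m(l)=q_m(l)/\sqrt{w(l)}$) yields
\begin{equation*}
C=\sum_{m=1}^{k} s_m (D^{-1}\overline{q_m})(q_m^{*}D^{-1})=\sum_{m=1}^{k} s_m \overline{u_m}u_m^{*},
\end{equation*}
which is the asserted formula.

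I do not expect any real obstacle; the only thing to be careful about is the bookkeeping of complex conjugation in the rank-one terms $\overline{q_m}q_m^{*}$ under conjugation by the real diagonal $D^{-1}$, and the (implicit) assumption that the con-eigenvalues of $B$ are distinct so that Theorem \ref{Takagi_Eckart_Young} applies verbatim. One could also note that the minimization is over all rank-$k$ matrices (not just symmetric ones), but the minimizer happens to be complex symmetric because $B$ is, so no additional argument is needed.
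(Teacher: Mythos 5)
Your proof is correct and takes essentially the same approach as the paper: conjugate by $D=\mathrm{diag}(\sqrt{w})$ to turn the weighted problem into the unweighted one, apply the Eckart--Young/Takagi theorem to $B=DAD$, and pull back via $u_m=D^{-1}q_m$. Your version is slightly more explicit than the paper's (which manipulates the norm identity directly) in spelling out that $C\mapsto DCD$ is a rank-preserving bijection, so the minimization really is over all rank-$k$ matrices and not just those of Takagi form; this is a small but genuine tightening of the argument.
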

\begin{proof}
By definition
$$\left\|A-\sum_{m=1}^{k}s_{m}\overline{u_{m}}u_{m}^{*}\right\|_{w}=\left\|\mathrm{diag}(\sqrt{w})\Big(A-\sum_{m=1}^{k}s_{m}\overline{u_{m}}u_{m}^{*}\Big)\,\mathrm{diag}(\sqrt{w})\right\|$$
$$=\left\|B-\sum_{m=1}^{k} s_m \big(\mathrm{diag}(\sqrt{w})\overline{u_{m}}\big)\big(\mathrm{diag}(\sqrt{w})u_{m}\big)^{*} \right\|$$
which according to Theorem \ref{Takagi_Eckart_Young} is minimized by choosing $u_m = (\mathrm{diag}(\sqrt{w}))^{-1}
q_m$ and by choosing $s_m$ as the con-eigenvalues of $B$.
\end{proof}

There are different ways to compute Takagi factorizations. We indicate one method, the first step of which is the following proposition.
\begin{proposition} \label{Takagi_real_compute}
Let $A$ and $B$ be real symmetric $(N\times N)$-matrices and let
$$W=\left( \begin{array}{rr}A&-B\\-B&-A\end{array}\right).$$
Let $d_{1}\geq d_{2}\geq\ldots\geq d_{2N}$ be the eigenvalues of $W$.
Then $d_{j}+d_{2N+1-j}=0$ for $j=1,2,\ldots,2N$, and an orthonormal basis of eigenvectors can be chosen as
$$\left(\begin{array}{c}X_{1}\\Y_{1}\end{array}\right),\left(\begin{array}{c}X_{2}\\Y_{2}\end{array}\right),\ldots,\left(\begin{array}{c}X_{2n}\\Y_{2n}\end{array}\right),$$
where $X_{j},Y_{j}\in\R^{N},\quad X_{2n+1-j}=-Y_{j}$ and $Y_{2n+1-j}=X_{j}$ for $j=1,2,\ldots,N$.
\end{proposition}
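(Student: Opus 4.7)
The plan is to exhibit an orthogonal involution that anti-commutes with $W$ and thereby forces the claimed pairing. Define
$$J=\left(\begin{array}{cc} 0 & -I\\ I & 0\end{array}\right)\in\R^{2N\times 2N},$$
which satisfies $J^{T}=-J$, $J^{2}=-I$, and is therefore orthogonal. A direct $2\times 2$ block multiplication, using only that $A$ and $B$ are symmetric, gives $JW=-WJ$.

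From this anticommutation, if $Wv=\lambda v$ then $W(Jv)=-J(Wv)=-\lambda(Jv)$, and since $J$ is orthogonal $\|Jv\|=\|v\|$. Hence $J$ maps the $\lambda$-eigenspace isometrically onto the $(-\lambda)$-eigenspace, so the spectrum is symmetric about $0$. Listing eigenvalues in decreasing order immediately yields $d_{j}+d_{2N+1-j}=0$.

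To construct an orthonormal basis of eigenvectors with the paired block form I would split into two cases. For each $\lambda>0$, pick any orthonormal basis $\{v_{i}\}$ of the $\lambda$-eigenspace; then $\{Jv_{i}\}$ is orthonormal in the $(-\lambda)$-eigenspace and, belonging to a different eigenspace of the symmetric matrix $W$, is automatically orthogonal to all the $v_{i}$'s. The delicate case is $\lambda=0$. The kernel $V_{0}$ of $W$ is $J$-invariant, and because $J|_{V_{0}}$ is orthogonal with square $-I$ it defines a complex structure, forcing $\dim V_{0}$ to be even. From $J^{T}=-J$ one gets $\langle v,Jv\rangle=0$ for every $v\in V_{0}$, so by iteratively picking a unit $v\in V_{0}$ and passing to the orthogonal complement of $\mathrm{span}(v,Jv)$ (which remains $J$-invariant since $J$ is orthogonal) one produces the desired pairing inside $V_{0}$ as well.

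Writing the resulting eigenvector $v_{j}$ as $(X_{j},Y_{j})^{T}$ gives $Jv_{j}=(-Y_{j},X_{j})^{T}$. Labeling $Jv_{j}$ as the $(2N+1-j)$-th vector in the ordered list -- which is forced by the $\pm\lambda$ pairing -- yields $X_{2N+1-j}=-Y_{j}$ and $Y_{2N+1-j}=X_{j}$. The only genuine obstacle is the bookkeeping inside the kernel; all other steps are dictated by the single identity $JW=-WJ$.
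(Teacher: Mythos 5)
Your proof is correct. The paper itself offers no argument for this proposition --- it simply remarks that the proof is left as an exercise in Horn and Johnson --- so there is no in-paper proof to compare against. Your route, via the orthogonal skew-symmetric involution $J$ satisfying $J^2=-I$ and $JW=-WJ$, is the natural one: the anticommutation gives the isometric pairing of $\pm\lambda$ eigenspaces, and the complex structure induced on $\ker W$ handles the zero eigenvalue cleanly (the space automatically has even dimension, $\langle v,Jv\rangle=0$, and the orthogonal complement of $\mathrm{span}(v,Jv)$ stays $J$-invariant). All the steps check out, including the labeling $v_{2N+1-j}=Jv_j$ for $j\le N$, which is exactly what the statement requires and which avoids the problem that $J^2=-I$ would cause if one tried to impose it for all $j$.

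One small inaccuracy worth flagging: the block computation $JW=-WJ$ in fact holds for arbitrary $A$ and $B$; the symmetry hypotheses are not used there. What symmetry of $A$ and $B$ buys you is that $W$ itself is symmetric, which you then correctly invoke to obtain a real orthonormal eigenbasis and the orthogonality of distinct eigenspaces. This is a matter of phrasing rather than a gap, but the sentence ``using only that $A$ and $B$ are symmetric'' misattributes where the hypothesis is actually needed.
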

The proof is given as an exercise in \cite{Horn}.

\subsection{Hankel matrices}\label{s2}
A \emph{Hankel} matrix $A$ has constant entries on the anti-diagonals, i.e. it satisfies the relation
$$A(j,l)=A(j',l'), \quad \mbox{if $j+l=j'+l'$}.$$
Every Hankel matrix $A\in\m_{N,N}$ can thus be generated
from some vector $f=(f_j)_{j=2}^{2N}$ by
\begin{equation}\label{defH}
A(j,l)=Hf(j,l)=f(j+l), \quad 1\le j,l\le N.
\end{equation}
An orthonormal basis for the Hankel matrices in $\m_{N,N}$ is
given by
\begin{equation} \label{Hankel_basis}
e_m(j,l) = \left\{
          \begin{array}{ll}
            \frac{1}{\sqrt{N+1-|N-m|}}, & \hbox{if $j+l=m$;} \\
            0, & \hbox{otherwise.}
          \end{array}
        \right.
\end{equation}
for $2\le m \le 2N$, where the normalization factor originates from the number of elements along anti-diagonal $m$. When
considering Hankel matrices in weighted spaces we need to use proper normalization; the basis elements should be normalized with respect
to the induced (matrix) weights along the anti-diagonal. We associate the weights
\begin{equation} \label{omegavikt}
\omega(m)=\underset{1\leq j,l\leq N}{\sum_{j+l=m}} w(j)w(l),
\quad 2\le m\le2N,
\end{equation}
to $w$, and note that this can be written as a discrete convolution
$\omega= \tilde{w}\ast\tilde{w}$, where $\tilde{w}$ denotes the zero
padded version of $w$. A basis for Hankel matrices in the weighted
space $\m^w_{N,N}$ is then given by
\begin{equation*}\label{Hankel_viktad_bas}
e^w_m(j,l) = \left\{
          \begin{array}{ll}
            \frac{1}{\sqrt{\omega(m)}}, & \hbox{if $j+l=m$;} \\
            0, & \hbox{otherwise.}
          \end{array}
        \right.
\end{equation*}
for $2\le m \le 2N$. Note that in the case $w=1$ we get
the ``triangle weight'' which appeared in
(\ref{Hankel_basis}). We let $\ell_{2N-1}^\omega$ be the space of complex sequences $f=(f_j)_{j=2}^{2N}$, equipped with the norm defined by
$$\|f\|_{\omega}^{2}=\sum_{j}|f_j|^2\omega(j).$$

The mapping $H$ (given by (\ref{defH})) will in the sequel be considered as a mapping from $\ell_{2N-1}^\omega$ to
$\langle\m_{N,N},\|\cdot\|_{w}\rangle$. It is a unitary map (isometric isomorphism), whose adjoint is
the weighted averaging operator
\begin{equation}\label{8}
H^\ast A(m) = \frac{1}{\omega(m)} \sum_{j+l=m} w(j) A(j,l) w(l),
\end{equation}
and $H^\ast H = I$. The following proposition is now immediate.

\begin{proposition}\label{p1}
Let $w\in\R_+^{N}$ be given and let $\omega$ be the associated
weight defined by (\ref{omegavikt}). Let $f=(f_j)_{j=2}^{2N}$ and let $\mathcal{S}$ be any set
of Hankel matrices. Then the problem
\begin{equation*}
\argmin{\tilde{H} \in \mathcal{S}} \| Hf -\tilde{H}\|_w
\end{equation*}
is equivalent to the problem
\begin{equation*}
\argmin{g \in H^\ast(\mathcal{S})} \| f -g\|_{\ell_{2N-1}^\omega}.
\end{equation*}
The solutions are related by $Hg = \tilde{H}$.
\end{proposition}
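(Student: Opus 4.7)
The plan is to leverage the fact already established in the excerpt that $H:\ell_{2N-1}^\omega\to\langle\m_{N,N},\|\cdot\|_w\rangle$ is an isometric isomorphism onto the subspace of Hankel matrices, with $H^\ast H=I$. Since every element of $\mathcal{S}$ is a Hankel matrix, the restriction $H:H^\ast(\mathcal{S})\to\mathcal{S}$ is a bijection; its inverse is simply $H^\ast$, because for any Hankel matrix $\tilde H$ we have $H(H^\ast \tilde H)=\tilde H$ (as $H^\ast$ recovers the generating sequence of a Hankel matrix via the weighted averaging formula \eqref{8}).

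With that bijection in hand, I would simply compute: for any $\tilde H\in\mathcal{S}$, write $g=H^\ast\tilde H\in H^\ast(\mathcal{S})$, so that $\tilde H=Hg$. By the isometry,
$$\|Hf-\tilde H\|_w=\|Hf-Hg\|_w=\|H(f-g)\|_w=\|f-g\|_{\ell_{2N-1}^\omega}.$$
Hence the two objective functions coincide along the correspondence $\tilde H\leftrightarrow g$, and so $\tilde H$ minimizes the left-hand problem over $\mathcal{S}$ if and only if $g=H^\ast\tilde H$ minimizes the right-hand problem over $H^\ast(\mathcal{S})$; the relation $Hg=\tilde H$ holds by construction.

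There is really no hard step here: the whole content is packaged into the assertion that $H$ is unitary, which was already recorded just before the proposition. The only thing one needs to be a bit careful about is confirming that $H^\ast$ acts as a genuine left inverse of $H$ on the Hankel subspace, so that the map $\tilde H\mapsto H^\ast\tilde H$ really does land in $H^\ast(\mathcal{S})$ bijectively; this follows at once from $H^\ast H=I$. Everything else is a one-line invocation of isometry.
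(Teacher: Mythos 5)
Your proof is correct and matches the paper's intent exactly: the paper states the proposition is "immediate" after recording that $H$ is an isometric isomorphism onto the Hankel subspace with $H^\ast H = I$, and your argument is precisely that one-line invocation of isometry together with the observation that $H^\ast$ inverts $H$ on Hankel matrices.
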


\section{Properties of fixed-rank and Hankel matrices} \label{Hankel_properties}
The key observation behind the algorithms of this paper is that a rank $k$ Hankel operator generically has a symbol which is a sum of $k$
exponentials. However, this is not always true, and neither is the projection onto rank $k$ matrices, given by Theorem \ref{Takagi_Eckart_Young}, well defined at all points. In this section we show that the exceptional set is very small. We introduce the concept
of a \textit{thin} set, and show that the exceptional points are confined to thin sets.

We denote by $\H_{N}$ the set of Hankel matrices in $\m_{N,N}$, and $\r_{N,k}$ will denote the
set of matrices in $\m_{N,N}$ of rank at most $k$.

\subsection{Manifold structure}\label{I}
In this entire section, we will work with subsets of $\m_{N,N}$, consisting of matrices
whose entries are ordered from $1$ to $N$. $\H$ is a linear subspace of $\m_{N,N}$
and, hence, a differentiable manifold of (real) dimension $2(2N-1)$.
By identifying $\C$ with $\R^2$ in the obvious way, a simple
modification of $H$ (defined in (\ref{defH})) provides a natural
chart. The structure of $\r_{N,k}$ is more complicated; we will show that it is a
manifold of (real) dimension $2(2Nk-k^2)$ outside a small exceptional set.
Suppose $A\in\r_{N,k}$, and use the singular value decomposition of $A$ to find $\sigma_A\in(\R^+)^k$ and $U_A,V_A$
such that $U_A^*U_A=V_A^*V_A=I_{k}$ (where $I_{k}$ is the $k\times k$ identity matrix and $U_A,~V_A$ are $N\times k$-matrices) and
\begin{equation}\label{parark}A=V_A\left(
                                   \begin{array}{cccc}
                                     \sigma_{A,1} & 0 & \cdots & 0 \\
                                     0 & \sigma_{A,2} & \ddots &\vdots \\
                                     \vdots & \ddots & \ddots & 0 \\
                                     0 & \cdots & 0 & \sigma_{A,k} \\
                                   \end{array}
                                 \right) U_A^*=V_A I_\sigma U_A^*.
\end{equation}
A typical matrix in $\r_{N,k}$ satisfies
\begin{equation}\label{distinct}\sigma_{A,1}>\sigma_{A,2}>\ldots>\sigma_{A,k}>0,\end{equation}
and, if this is not the case, an arbitrary small numerical
perturbation will yield distinct singular values. The subset of
$\r_{N,k}$, consisting of $N\times N$-matrices satisfying (\ref{distinct}), will be denoted $\r_{N,k}^d$, where
$d$ stands for ``distinct''. If $\M$ is a manifold and $E$ is a set, contained in the union of finitely many
manifolds of dimension lower than the dimension of $\M$, we will say that $E$ is \textit{thin relatively to} $M$.
\begin{proposition}
$\r_{N,k}^d\subset\m_{N,N}$ is a manifold of (real) dimension
$2(2Nk-k^2)$. Moreover, $\r_{N,k}=\overline{\r_{N,k}^d}$ and
$\r_{N,k}\setminus\r_{N,k}^d$ is thin relatively to $\r_{N,k}$.
\end{proposition}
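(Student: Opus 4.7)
The plan is to derive all three assertions from the truncated Singular Value Decomposition \eqref{parark} combined with the strict-inequality condition \eqref{distinct}. For any $A\in\r_{N,k}^d$, the decomposition determines $(V_A,\sigma_A,U_A)$ uniquely up to the free action of the diagonal torus $T^k$ given by $(V_A,\sigma_A,U_A)\mapsto(V_AD,\sigma_A,U_AD)$ with $D=\operatorname{diag}(e^{i\theta_1},\ldots,e^{i\theta_k})$; the distinctness of the $\sigma_{A,j}$ rules out any further ambiguity coming from rotations within eigenspaces of $A^*A$. This suggests parametrizing $\r_{N,k}^d$ as the quotient of the open set
$$\Omega=V_k(\C^N)\times\Sigma_k\times V_k(\C^N),\qquad \Sigma_k=\{\sigma\in(\R^+)^k:\sigma_1>\cdots>\sigma_k\},$$
by this $T^k$-action, where $V_k(\C^N)=\{W\in\m_{N,k}:W^*W=I_k\}$ is the complex Stiefel manifold of real dimension $2Nk-k^2$.

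\textbf{Manifold structure and dimension.} The key calculation is to show that the smooth map $\Phi(V,\sigma,U)=VI_\sigma U^*$ has constant rank on $\Omega$ with kernel everywhere equal to the tangent space of the $T^k$-orbit. Decomposing the equation $d\Phi[\dot V,\dot\sigma,\dot U]=0$ using orthonormal complements $V_\perp,U_\perp$ into its four blocks, the off-diagonal equations force $V_\perp^*\dot V=0$ and $U_\perp^*\dot U=0$ (because all $\sigma_i>0$), while condition \eqref{distinct} kills the off-diagonal entries of the skew-Hermitian matrices $V^*\dot V$ and $U^*\dot U$, leaving only a purely imaginary diagonal --- precisely the infinitesimal $T^k$-action. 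The constant-rank theorem then realizes $\r_{N,k}^d=\Phi(\Omega)$ as a smooth manifold of real dimension $2(2Nk-k^2)+k-k=2(2Nk-k^2)$. A shortcut avoiding the quotient is the implicit function theorem applied in a local block decomposition: at a point where some $k\times k$ block $A_{11}$ is invertible, $\r_{N,k}$ is locally the graph $A_{22}=A_{21}A_{11}^{-1}A_{12}$, carrying the $k^2+2k(N-k)=2Nk-k^2$ complex parameters of $(A_{11},A_{12},A_{21})$.

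\textbf{Density and thinness.} Density is immediate perturbation: any $A\in\r_{N,k}$ admits a representation \eqref{parark} with $\sigma\in(\R_{\ge 0})^k$, and replacing $\sigma$ by a nearby $\tilde\sigma\in\Sigma_k$ produces an element of $\r_{N,k}^d$ arbitrarily close to $A$ in Frobenius norm. For thinness, I split $\r_{N,k}\setminus\r_{N,k}^d=\r_{N,k-1}\cup E$, where $E$ consists of rank-$k$ matrices with at least one pair of equal singular values. By the preceding analysis applied at each smaller rank $j<k$, the first piece sits inside the finite union $\bigcup_{j=0}^{k-1}\r_{N,j}^d$ of manifolds of real dimensions $2(2Nj-j^2)<2(2Nk-k^2)$ (the inequality is strict since $j<k\le N$ and the function $j\mapsto 2Nj-j^2$ is increasing on $[0,N]$). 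The set $E$ is the zero set in $\r_{N,k}^d$ of the discriminant of the polynomial $\det(\lambda I-A^*A)/\lambda^{N-k}$; this discriminant is a real polynomial in the entries of $A$ and is not identically zero on $\r_{N,k}^d$, so its zero set is a proper real algebraic subvariety of $\r_{N,k}^d$, and therefore a finite union of real analytic submanifolds of strictly smaller dimension. Together, these two pieces exhibit $\r_{N,k}\setminus\r_{N,k}^d$ as a finite union of manifolds of dimension strictly less than $\dim\r_{N,k}^d$, as required.

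\textbf{Main obstacle.} The principal technical step is the rank/kernel analysis of $d\Phi$: although the linear algebra is elementary, one must carefully split perturbations into components tangent and normal to the column spaces of $V$ and $U$ and use \eqref{distinct} at precisely the right place to eliminate the off-diagonal coupling between the perturbations of $V$ and of $U$. Everything else is either standard Stiefel-manifold differential geometry, a one-line perturbation of the singular values, or the standard fact that a proper real algebraic subvariety of a real-algebraic manifold is a finite union of submanifolds of strictly lower dimension.
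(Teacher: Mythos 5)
Your argument is correct and, for the manifold-structure and dimension count, follows the same overall route as the paper: both parametrize $\r^d_{N,k}$ via the Stiefel manifold $V_k(\C^N)$ (real dimension $2Nk-k^2$) together with the ordered singular values, and both identify the residual $T^k=(S^1)^k$ phase ambiguity and subtract its dimension. The paper phrases this as a diffeomorphism $F:\r^d_{N,k}\times(S^1)^k\to V_k(\C^N)\times\Sigma_k\times V_k(\C^N)$ and reads off dimensions, whereas you invoke the constant-rank theorem on $\Phi(V,\sigma,U)=VI_\sigma U^*$ and pin down the kernel of $d\Phi$ using distinctness of the singular values; the latter is more careful (it avoids the unstated issue of choosing a smooth section $A\mapsto(V_A,\sigma_A,U_A)$), and your block-decomposition shortcut via the graph $A_{22}=A_{21}A_{11}^{-1}A_{12}$ is a genuinely different and more elementary route to the same dimension. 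The bigger difference is that the paper explicitly omits the proofs of $\r_{N,k}=\overline{\r^d_{N,k}}$ and of thinness (``can be obtained by standard matrix theory and differential geometry''), while you supply both: density by perturbing the singular values, and thinness by decomposing the complement into the lower-rank strata plus the equal-singular-value locus $E$, and killing $E$ with the discriminant of the reduced characteristic polynomial of $A^*A$. One wording slip in the thinness argument: you describe $E$ as a zero set ``in $\r^d_{N,k}$,'' but $E$ is by construction disjoint from $\r^d_{N,k}$; what you mean (and what the argument actually uses) is that $E$ is the discriminant's zero set inside the locally $2(2Nk-k^2)$-dimensional manifold of rank-exactly-$k$ matrices, and it is a proper real algebraic subvariety there. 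With that correction the argument is sound.
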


\begin{proof}
We start by remarking that the set $\U(N,k)$ of complex $N\times k$-matrices $U$ satisfying $U^{*}U=I_{k}$ is a real manifold of
dimension $2Nk-k^{2}$. Namely, the columns $U(\cdot,1),U(\cdot,2),\ldots,U(\cdot,k)$ of such a matrix can be identified with points on $S^{2N-1}$, and thus $\U(N,k)$ can be identified with the subset of elements $U\in(S^{2N-1})^{k}$, satisfying the functionally independent equations
$$\mbox{\rm Re }U^{*}(\cdot,j)U(\cdot,l)=\mbox{\rm Im }U^{*}(\cdot,j)U(\cdot,l)=0,\quad 1\leq l\leq j\leq k.$$
The number of these equations is
$$2+4+\ldots+2(k-1)=k^{2}-k,$$
and thus $\U(N,k)$ is a manifold of dimension
$$k(2N-1)-(k^{2}-k)=2Nk-k^{2}.$$

Now let $A\in\r_{N,k}^d$. Then there are matrices $U_{A}$ and $V_{A}$ in $\U(N,k)$ and a vector $\sigma_{A}\in\R^{k}_{+}$ with $\sigma_{A,1}>\sigma_{A,2}>\ldots>\sigma_{A,k}$, such that
$$A=V_{A}\,\mathrm{diag}(\sigma_{A})\,U^{*}_{A}=\sum^{k}_{j=1}\sigma_{A,j}V_{A}(\cdot,j)U^{*}_{A}(\cdot,j).$$
In this representation, the numbers $\sigma_{A,j}$ are uniquely determined by $A$, and so are the products $V_{A}(\cdot,j)U^{*}_{A}(\dot,j)$, but the vectors $U_{A}(\cdot,j)$ and $V_{A}(\cdot,j)$ are not; each vector $U_{A}(\cdot,j)$ can be multiplied by a complex unit factor $e^{i\theta_{j}}\in S^{1}$ and $V_{A}(\cdot,j)$ by the same factor, whence the product $V_{A}(\cdot,j)U^{*}_{A}(\cdot,j)$ remains unaffected.
We can thus define a mapping
$$F:\r^{d}_{k}\times(S^{1})^{k}\to\U(N,k)\times\{\sigma\in\R^{k};\,\sigma_{1}>\sigma_{2}>\ldots>\sigma_{k}>0\}\times\U(N,k)$$
by
$$F(A,(e^{i\theta_{j}})^{k}_{j=1})=\Big(V_{A}\mathrm{diag}(e^{i\theta_{j}})^{k}_{j=1},\,\mathrm{diag}(\sigma_{A}),\,U_{A}\mathrm{diag}(e^{i\theta_{j}})^{k}_{j=1}\Big).$$
It is easily verified that this mapping is a diffeomorphism, and hence
$$\dim\r^{d}_{k}+k=(2Nk-k^{2})+k+(2Nk-k^{2}),\quad\mbox{i.e.}\quad\dim\r^{d}_{k}=2(2Nk-k^{2})$$
We omit a proof of the remaining statements, which can be obtained
by standard matrix theory and differential geometry.
\end{proof}

Given a matrix $A\in\m_{N,N}$, the closest point in $\r_{N,k}$ is given
by the Eckart-Young theorem, and it is unique whenever the singular
values are distinct. By the above theorem, it is very improbable
that this would not be the case for an arbitrary matrix $A$. Indeed,
when working with ``real numerical'' data this never happens, so we
will for simplicity treat the projection onto $\r_{N,k}$ as a well
defined map which we denote by $\pi_{\r_{N,k}}$. A more stringent approach would be to work with ``point to set''-maps, as in \cite{cadzow} and
\cite{Zangwill}.

\subsubsection*{Infinite Hankel matrices of finite rank}
To understand the structure of Hankel matrices, it seems indispensable to consider infinte Hankel matrices, by which we mean complex-valued functions on $\mathbf{N}\times\mathbf{N}$, where $\mathbf{N}=\{0,1,2,\ldots\}$ (in this section we include $0$ in the index set for convenience). For a complex valued funktion $f$ on $\mathbf{N}$, we denote by $Hf$ the infinite Hankel matrix with $Hf(j,l)=f(j+l)$. This means that $H$ is an operator from $\C^{\N}$ to $\C^{\N^{2}}$.

The rank of an infinite matrix is the dimension of its column space (the linear space generated by its columns).

Assume that $A=Hf$ is an infinite Hankel matrix, such that some column is a (complex) linear combination of the preceding ones (i.e. $\mathrm{rank}\,A<\infty$). Låt $A(\cdot,r)$ be the fist one of these. It thus holds
$$A(j,r)+\sum^{r-1}_{l=0}\lambda_{l}A(j,l)=0$$
for all $j\in\mathbf{N}$ (where $\lambda_{0},\ldots,\lambda_{r-1}$ are complex numbers), which means that
$$f(j+r)+\sum^{r-1}_{l=0}\lambda_{j}f(j+l)=0,$$
i.e.
\begin{equation}\label{eq:rekursion}
f(k)+\sum^{r-1}_{l=0}\lambda_{l}f(k-r+l)=0,\quad k\geq r.
\end{equation}
Vi find that every column, starting with $A(\cdot,r)$, is a linear combination (with the same coefficients) of the $r$ preceding columns, and we conclude that $r$ is the rank of the matrix.

\begin{theorem}\label{corner}
Let $A$ be an infinite Hankel matrix of rank $r<\infty$. Then
$$\left|\begin{array}{cccc}A(0,0)&A(0,1)&\ldots&A(0,r-1)\\A(1,0)&A(1,1)&\ldots&A(1,r-1)\\\vdots&\vdots&\ddots&\vdots\\A(r-1,0)&A(r-1,1)&\ldots&A(r-1,r-1)\end{array}\right|\neq 0.$$
\end{theorem}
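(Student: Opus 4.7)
The plan is to prove the theorem by contradiction, leveraging the recursion (\ref{eq:rekursion}) as the central tool. First I would recall from the paragraph preceding the theorem that $A(\cdot,r)$ was characterized as the \emph{first} column of $A$ expressible as a linear combination of the earlier columns; hence the columns $A(\cdot,0),\ldots,A(\cdot,r-1)$ of the infinite matrix are linearly independent. The strategy is to show that vanishing of the displayed $r\times r$ determinant would force a nontrivial linear relation among precisely these columns, a contradiction.

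Suppose therefore that the determinant is zero. Then the rows of the displayed block are linearly dependent, so there exist scalars $\mu_{0},\ldots,\mu_{r-1}$, not all zero, such that
$$\sum_{l=0}^{r-1}\mu_{l}f(j+l)=0,\qquad j=0,1,\ldots,r-1.$$
To reach a contradiction with the linear independence noted above, it suffices to extend this identity to every $j\geq 0$. I would therefore introduce the auxiliary sequence $g(j)=\sum_{l=0}^{r-1}\mu_{l}f(j+l)$, which by construction vanishes on the initial window $0\leq j<r$, and aim to conclude $g\equiv 0$.

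The key observation is that $g$ inherits the recursion (\ref{eq:rekursion}) satisfied by $f$. Indeed, interchanging the order of summation and invoking (\ref{eq:rekursion}) on $f$ at indices $j+m\geq r$ gives
$$\sum_{l=0}^{r-1}\lambda_{l}\,g(j-r+l)=\sum_{m=0}^{r-1}\mu_{m}\sum_{l=0}^{r-1}\lambda_{l}f(j-r+l+m)=-\sum_{m=0}^{r-1}\mu_{m}f(j+m)=-g(j)$$
for every $j\geq r$. Thus $g$ obeys the very same recursion as $f$, which determines $g(j)$ for $j\geq r$ from the initial values $g(0),\ldots,g(r-1)$; since these initial values are all zero, a straightforward induction in $j$ yields $g\equiv 0$. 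This is precisely the promised nontrivial linear relation $\sum_{l=0}^{r-1}\mu_{l}A(\cdot,l)=0$ among the first $r$ columns of $A$, contradicting their linear independence.

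The conceptual heart of the proof is the commutation between ``form the fixed linear combination that defines $g$'' and ``apply the recursion to $f$''; once this is noticed, everything else is routine. I do not expect any genuine obstacle: the independence of the first $r$ columns is built into the definition of $r$, and the determinant being zero translates in the standard way into a linear relation among the rows of the finite $r\times r$ block.
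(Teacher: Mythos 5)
Your proof is correct. Both you and the paper argue by contradiction, aiming to show that a vanishing upper-left $r\times r$ determinant would force the first $r$ columns of the infinite matrix to be linearly dependent, which contradicts the minimality of $r$. The routes differ in how that dependence is established. The paper uses the fact that the row relation mirrors the column relation (since $A(j,l)$ depends only on $j+l$) to conclude that every $r$-rowed minor drawn from the first $r$ columns vanishes, and then invokes the rank characterization by minors. You instead take the explicit coefficients $\mu_0,\ldots,\mu_{r-1}$ from the degenerate $r\times r$ block, form $g(j)=\sum_l\mu_l f(j+l)$, observe that $g$ obeys the same linear recursion (\ref{eq:rekursion}) as $f$ (the commutation you highlight, valid because $j+m\geq r$ when $j\geq r$, $m\geq 0$), and then kill $g$ entirely by induction from the zero initial window. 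Your version is more elementary: it bypasses the row argument and the all-minors-vanish criterion, producing the contradicting linear relation directly. The paper's version is perhaps quicker to state once one is comfortable with rank-via-minors, but both are short and sound, and the underlying mechanism — propagating a linear relation forward using the defining recursion — is the same.
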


\begin{proof} Assume that the determinant vanishes. We have seen that every column, starting with $A(\cdot,r)$ is a linear combination (with the same coefficients) of the $r$ preceding ones, and in the same way we see that the corresponding relation holds for the rows. It now follows that
$$\left|\begin{array}{cccc}A(j_{1},0)&A(j_{1},1)&\ldots&A(j_{1},r-1)\\A(j_{2},0)&A(j_{2},1)&\ldots&A(j_{2},r-1)\\\vdots&\vdots&\ddots&\vdots\\A(j_{r},0)&A(j_{r},1)&\ldots&A(j_{r},r-1)\end{array}\right|=0$$
whenever $0\leq j_{1}<j_{2}<\ldots<j_{r}$, since every row in this determinant is a linear kombination of the linearly dependent rows $A(0,\cdot),A(1,\cdot),\ldots,A(r-1,\cdot)$. This means that the first $r$ columns of $A$ are linearly dependent, contrary to the observations made above.
\end{proof}

We now study the generating function for $f$:
$$F(x)=\sum^{\infty}_{k=0}f(k)x^{k}$$
Using (\ref{eq:rekursion}), we get
\begin{equation}\label{eq:kvot}
F(x)=\frac{f(0)+\sum^{r-1}_{k=1}\Big(f(k)+\sum^{r-1}_{l=r-k}\lambda_{l}f(k-r+l)\Big)x^{k}}{1+\sum^{r}_{k=1}\lambda_{r-k}x^{k}}.
\end{equation}
In this quotient, the degree of the numerator is at most $r-1$.
If $\lambda_{0}\neq 0$, the degree of the denominator is $r$, and there is an expansion
\begin{equation}\label{eq:partial}
F(x)=\sum^{p}_{\nu=1}\sum^{m_{\nu}-1}_{\mu=0}\frac{a_{\nu,\mu}}{(1-\zeta_{\nu}x)^{\mu+1}},
\end{equation}
where $m_{1}+m_{2}+\ldots+m_{p}=r$, and $a_{\nu,\mu}$ are constants with $a_{\nu,m_{\nu}}\neq 0$. Hence
$$F(x)=\sum^{p}_{\nu=1}\sum^{m_{\nu}-1}_{\mu=0}\frac{a_{\nu,\mu}}{\mu!}\frac{d^{\mu}}{dx^{\mu}}\frac{\zeta^{-\mu}_{\nu}}{1-\zeta_{\nu}x}=
\sum^{p}_{\nu=1}\sum^{m_{\nu}-1}_{\mu=0}\frac{a_{\nu,\mu}}{\mu!}\frac{d^{\mu}}{dx^{\mu}}\sum^{\infty}_{k=0}\zeta^{k-\mu}_{\nu}x^{k}$$
$$=\sum^{p}_{\nu=1}\sum^{m_{\nu}-1}_{\mu=0}A_{\nu,\mu}\sum^{\infty}_{k=\mu}\left(\begin{array}{c}k\\\mu\end{array}\right)(\zeta_{\nu}x)^{k-\mu}=\sum^{p}_{\nu=1}\sum^{m_{\nu}-1}_{\mu=0}a_{\nu,\mu}\sum^{\infty}_{k=0}\left(\begin{array}{c}k+\mu\\\mu\end{array}\right)(\zeta_{\nu}x)^{k}$$
$$=\sum^{\infty}_{k=0}\sum^{p}_{\nu=1}\sum^{m_{\nu}-1}_{\mu=0}a_{\nu,\mu}\left(\begin{array}{c}k+\mu\\\mu\end{array}\right)\zeta_{\nu}^{k}x^{k}.$$
We find that
$$f(k)=\sum^{p}_{\nu=1}Q_{\nu}(k)\zeta_{\nu}^{k},$$
where the $Q_{\nu}$ are polynomials of degree $m_{\nu}-1,\;\nu=1,2,\ldots,p$.

If $\lambda_{0}=0$, the numerator in (\ref{eq:kvot}) is of degree $r-1$, because otherwise the columns $A(\cdot,r-1)$ would be a linear combination of the preceding ones, contrary to our choice of $r$. In this case we let $d$ be the first number with $\lambda_{d}\neq 0$, and a polynomial division yields
$$F(x)=Q(x)+\sum^{p}_{\nu=1}\sum^{m_{\nu}-1}_{\mu=0}\frac{a_{\nu,\mu}}{(1-\zeta_{\nu}x)^{\mu+1}},\;\;\deg Q(x)=d-1.$$
where $m_{1}+m_{2}+\ldots+m_{p}=r-d$, and the $a_{\nu,\mu}$ are constants with $a_{\nu,m_{\nu}}\neq 0$.

If we define $\delta(j)$ as $0$ when $j\neq 0$ and $1$ when $j=0$, we can write

\begin{theorem}
Let $A=Hf$ be an infinite Hankel matrix of finite rank $r$. Then
$$f(k)=\sum^{d-1}_{\mu=0}c_{\mu}\delta(k-\mu)+\sum^{p}_{\nu=1}Q_{\nu}(k)\zeta^{k}_{\nu},$$
where $c_{d-1}\neq 0$ (in case $d\geq 1$), $Q_{\nu}$ are polynomials with $\deg Q_{\nu}=m_{\nu}-1$, and $d+m_{1}+m_{2}+\ldots+m_{p}=r$.
\end{theorem}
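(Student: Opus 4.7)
The plan is to read the theorem as a compact restatement of the two case analyses that have just been carried out on the generating function $F(x)=\sum_{k=0}^\infty f(k)x^k$, filling in the only nontrivial verification that $c_{d-1}\neq 0$.

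First, from the column recursion (\ref{eq:rekursion}) derived just above the theorem, $F(x)$ is the rational function (\ref{eq:kvot}) whose denominator is $1+\sum_{k=1}^r \lambda_{r-k}x^k$ and whose numerator has degree at most $r-1$. The integer $d$ in the theorem statement is identified with the smallest index for which $\lambda_d\neq 0$ (and $d=0$ if $\lambda_0\neq 0$), so that the denominator has degree exactly $r-d$. Once this rational representation is in place, the theorem is essentially an algebraic translation between $F$ and $f$.

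The plan then splits into the two cases already set up. In the case $\lambda_0\neq 0$ (so $d=0$), the denominator has degree exactly $r$, and the partial fraction expansion (\ref{eq:partial}) together with the series expansion $(1-\zeta x)^{-(\mu+1)}=\sum_{k}\binom{k+\mu}{\mu}(\zeta x)^k$ carried out above produces $f(k)=\sum_{\nu=1}^p Q_\nu(k)\zeta_\nu^k$ with $\deg Q_\nu = m_\nu-1$ and $\sum m_\nu = r$; the $\delta$-sum is empty, so the statement of the theorem holds. In the case $\lambda_0=0$, a polynomial division gives $F(x)=Q(x)+\sum_{\nu,\mu} a_{\nu,\mu}/(1-\zeta_\nu x)^{\mu+1}$ with $\deg Q = (\mathrm{numerator\ degree})-(r-d)$, and the proper rational part, treated exactly as in the first case, contributes $\sum_{\nu=1}^p Q_\nu(k)\zeta_\nu^k$ with $\sum m_\nu = r-d$. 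Writing $Q(x)=\sum_{\mu=0}^{d-1}c_\mu x^\mu$ produces the $\sum_\mu c_\mu \delta(k-\mu)$ piece since the coefficient of $x^\mu$ in $F$ equals $f(\mu)$ plus a contribution from the proper part that is absorbed by the exponentials. The two sums add up to $d+\sum m_\nu = r$, as claimed.

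The main obstacle — and essentially the only point that requires genuine argument beyond the bookkeeping above — is showing that $c_{d-1}\neq 0$, equivalently that the numerator in (\ref{eq:kvot}) has degree exactly $r-1$ whenever $\lambda_0=0$. I would prove this by contradiction: if the coefficient of $x^{r-1}$ in the numerator vanishes, then combining this vanishing with the vanishing of $\lambda_0,\ldots,\lambda_{d-1}$ yields a linear relation expressing the $(r-1)$th column $A(\cdot,r-1)$ as a combination of the previous $r-1$ columns, which contradicts the choice of $r$ as the \emph{first} index producing such a dependence (the defining property of $r$ used to derive (\ref{eq:rekursion})). Once this rank-minimality argument is made precise, $\deg Q = d-1$ is forced, the claimed normalization $c_{d-1}\neq 0$ holds, and the theorem follows by assembling the two cases.
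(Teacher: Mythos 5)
Your reading of the theorem as a summary of the two-case analysis on the generating function $F(x)$ is exactly how the paper treats it; there is no separate proof in the paper, only the preceding derivation via (\ref{eq:kvot}) and (\ref{eq:partial}). Your identification of the one step that needs a real argument -- that the numerator of (\ref{eq:kvot}) has degree exactly $r-1$ when $\lambda_0=0$, so that $\deg Q=d-1$ and $c_{d-1}\neq 0$ -- matches the paper's (terse) remark, and your sketched contradiction is the right one: with $\lambda_0=0$ the recursion $\sum_{l=0}^{r}\lambda_l f(j+l)=0$ for $j\geq 0$ already supplies $\sum_{m=0}^{r-1}\lambda_{m+1}f(j+m)=0$ for $j\geq 1$, and the vanishing of the $x^{r-1}$ coefficient supplies exactly the missing case $j=0$, which together force $A(\cdot,r-1)$ into the span of the earlier columns, contradicting minimality of $r$. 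Two small remarks: only $\lambda_0=0$ (not all of $\lambda_0,\dots,\lambda_{d-1}$) is actually used in that contradiction; and the sentence about the coefficient of $x^\mu$ in $F$ is slightly garbled -- the cleaner statement is simply that $f(k)=[x^k]Q(x)+[x^k](\text{proper part})$, so the $\delta$-terms are the coefficients of $Q$ and the exponential terms are the Taylor coefficients of the proper rational part.
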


We can also write
$$f(j+l)=\sum^{d-1}_{\mu=0}¨c_{\mu}\delta(j+l-\mu)+\sum^{p}_{\nu=1}\sum^{m_{\nu}-1}_{\mu=0}q_{\nu,\mu}(j)\zeta^{j}_{\nu}l^{\mu}\zeta^{l},\quad\deg q_{\nu.\mu}=m_{\nu}-1-\mu.$$

We will now investigate how the nodes $\zeta_{\nu}$ can be determined. We put
$$P(x)=x^{d}\prod^{p}_{\nu=1}(x-\zeta_{\nu})^{m_{\nu}}=\sum^{r}_{l=0}\lambda_{l}x^{l}.$$
Then, for $\nu=1,2,\ldots,p$,
$$(x\frac{d}{dx})^{\mu}P(x)_{|x=\zeta_{\nu}}=0,\quad\mu=0,1,\ldots,m_{\nu}-1,$$
i.e.
$$\sum^{r}_{l=0}\lambda_{l}l^{\mu}\zeta^{l}_{\nu}=0,\quad\mu=0,1,\ldots,m_{\nu}-1.$$
It also holds $\lambda_{l}=0$ if $l<d$. Hence, for $l=0,1,2,\ldots,r$,
$$\sum^{r}_{l=0}f(j+l)\lambda_{l}=\sum^{d-1}_{\mu=0}c_{\mu}\sum^{r}_{l=0}\delta(j+l-\mu)\lambda_{l}+\sum^{p}_{\nu=1}\sum^{m_{\nu}-1}_{\mu=0}q_{\nu,\mu}(j)\zeta^{j}_{\nu}\sum^{r}_{l=0}\lambda_{l}l^{\mu}\zeta^{l}=0$$

Now define, for any nonnegative integer $k$, the {\em upper left corner submatrix} of order $k+1$ by
$$A_{k}=\left(\begin{array}{cccc}A(0,0)&A(0,1)&\ldots&A(0,k)\\A(1,0)&A(1,1)&\ldots&A(1,k)\\\vdots&\vdots&\ddots&\vdots\\A(k,0)&A(k,1)&\ldots&A(k,k)\end{array}\right).$$
We know that $\det A_{r-1}\neq 0$ and $\det A_{r}=0$. Hence the kernel for $A_{r}$ is one-dimensional, and we have characterized it: It is generated by the vector $(\lambda_{0},\lambda_{1},\ldots,\lambda_{r})$, where
$$\sum^{r}_{l=0}\lambda_{l}x^{l}=x^{d}\prod^{p}_{\nu=1}(x-\zeta_{\nu})^{m_{\nu}}.$$
We now observe that the numbers $\lambda_{0},\lambda_{1},\ldots,\lambda_{r}$ are exactly the numbers appearing in (\ref{eq:rekursion}) (with $\lambda_{r}=1$), and using that recursion equation, it is easily seen that for $k\geq r$, the $(k+1-r)$-dimensional kernel of $A_{k}$ is generated by the vectors $(0,\ldots,0,\lambda_{0},\lambda_{1},\ldots,\lambda_{r-1},\lambda_{r},0,\ldots,0)$. The coordinates of these vectors are the coefficients in the polynomials $x^{l}P(x),~l=0,1,\ldots,k-r$. We now summarize the observations made:

\begin{proposition}\label{centralpolynom}
Let $A=Hf$ be an infinite Hankel matrix of rank $r<\infty$.
Then
\begin{itemize}\item[]
\item[] $$f(k)=\sum^{d-1}_{\mu=0}c_{\mu}\delta(k-\mu)+\sum^{p}_{\nu=1}Q_{\nu}(k)\zeta^{k}_{\nu},$$
where $d\geq 0,~c_{d-1}\neq 0$ (in case $d\geq 1$), $Q_{\nu}$ are polynomials with $\deg Q_{\nu}=m_{\nu}-1$ and $d+m_{1}+m_{2}+\ldots+m_{p}=r$.
\item[] If $k\geq r$, the vector $(\mu_{0},\mu_{1},\ldots,\mu_{k})$ belongs to the kernel of $A_{k}$ if and only if there is a polynomial $Q$ of degree at most $k-r$, such that
$$\sum^{k}_{l=0}\mu_{l}x^{l}=Q(x)x^{d}\prod^{p}_{\nu=1}(x-\zeta_{\nu})^{m_{\nu}}.$$
\end{itemize}
\end{proposition}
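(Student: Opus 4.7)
The plan is to assemble both statements from the generating-function computation carried out immediately above the proposition; apart from one subtlety the proof is essentially bookkeeping. First I would derive the structure of $f$ by splitting into the cases $\lambda_0 \neq 0$ and $\lambda_0 = 0$. In the former case the denominator in (\ref{eq:kvot}) has degree exactly $r$, the partial fraction expansion (\ref{eq:partial}) has no polynomial part, so $d = 0$; expanding each term $1/(1 - \zeta_\nu x)^{\mu+1}$ as a power series and regrouping yields $f(k) = \sum_\nu Q_\nu(k) \zeta_\nu^k$ with $\deg Q_\nu = m_\nu - 1$. In the latter case I would set $d$ equal to the smallest index with $\lambda_d \neq 0$ and perform the polynomial division already indicated, obtaining a quotient of degree $d - 1$ whose coefficients supply the delta contributions, plus a proper rational function with $\sum m_\nu = r - d$. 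The delicate point is to verify that $c_{d-1} \neq 0$, i.e.\ that this quotient has degree exactly $d - 1$; this follows from the observation flagged in the text that when $\lambda_0 = 0$ the numerator of (\ref{eq:kvot}) has degree exactly $r - 1$, for otherwise $A(\cdot, r-1)$ would be a linear combination of earlier columns, contradicting the choice of $r$ as the index of the first dependent column.

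For the second statement I would combine Theorem \ref{corner}, which gives $\det A_{r-1} \neq 0$, with $\det A_r = 0$ (forced by $\mathrm{rank}\, A = r$) to conclude $\dim \ker A_r = 1$. The recursion (\ref{eq:rekursion}) exhibits $(\lambda_0, \ldots, \lambda_{r-1}, 1)$ in $\ker A_r$, so it generates the kernel; by construction its entries are precisely the coefficients of $P(x) = x^d \prod_\nu (x - \zeta_\nu)^{m_\nu}$. For $k > r$, the $k - r + 1$ shifted vectors corresponding to the polynomials $x^l P(x)$, $0 \leq l \leq k - r$, all lie in $\ker A_k$ (each one encodes the recursion at a shifted starting index) and are trivially linearly independent. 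The ``if'' direction of the second assertion is then automatic, so only the ``only if'' direction remains.

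The main obstacle I anticipate is the rank count required to show that these shifts already span $\ker A_k$, i.e.\ that $\dim \ker A_k = k + 1 - r$ and not just $\geq k + 1 - r$. For the lower bound on the rank, the upper-left $r \times r$ block of $A_k$ equals $A_{r-1}$, which is nonsingular by Theorem \ref{corner}, so $\mathrm{rank}\, A_k \geq r$; the reverse inequality is immediate since $A_k$ is a submatrix of the rank-$r$ matrix $A$. Once equality is in place, every element of $\ker A_k$ must be a linear combination of the shifts of $P$, which is exactly what it means for $\sum_{l=0}^k \mu_l x^l$ to factor as $Q(x) P(x)$ with $\deg Q \leq k - r$, completing the proof.
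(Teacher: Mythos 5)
Your proposal is correct and follows essentially the same route as the paper, which presents the proposition as a summary of the generating-function computation (cases $\lambda_0\neq 0$ and $\lambda_0=0$) and the corner-determinant argument carried out just before it. The one step the paper treats as "easily seen" — that the shifted vectors actually span $\ker A_k$ — you justify cleanly via the rank count $\mathrm{rank}\,A_k = r$, using the nonsingular $r\times r$ corner from Theorem \ref{corner} together with the submatrix bound.
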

We call the polynomial
$$P(x)=x^{d}\prod^{p}_{\nu=1}(x-\zeta_{\nu})^{m_{\nu}}$$
{\em the central polynomial} for $A$.

\subsubsection*{Finite Hankel matrices}
For an infinite Hankel matrix of finite rank $r$, we have seen that the upper left corner matrix of order $r$ is non-singular. For finite Hankel matrices, this will not always be the case. Let $A$ be a Hankel matrix of size $N\times N$, i.e a complex.valued function on $\{(j,l)\in\mathbf{N}^{2};\,0\leq j\leq N-1,\;0\leq l\leq N-1\}$, such that $A(j,l)=A(j',l')$ whenever $j+l=j'+l'$. Then there are infinitely many fucnctions $f$ on $\N$, such that $A_{j,l}=f(j+l)$. Such a function $f$ is determined by $A=Hf$ only on the set $\{0,1,\ldots,2N-2\}$. Vi will now discuss ``canonical'' extensions of $A$ to $\mathbf{N}^{2}$.
\begin{theorem} \label{th:leftuppercorner_hankel}
Låt $A=Hf$ be a $N\times N$ Hankel matrix of rank $r<N$ and assume that its upper left corner submatrix $A_{r}$ of order $r$ is non-singular. Then there are uniquely determined constants $\lambda_{0},\lambda_{1},\ldots,\lambda_{r-1}$, such that
\begin{equation}\label{eq:rek2}
f(k)+\sum^{r}_{l=0}\lambda_{l}f(k-r+l)=0,\;\;k=r,r+1,\ldots,2N-2.
\end{equation}
\end{theorem}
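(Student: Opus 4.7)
My plan is to reduce the finite statement to the infinite Hankel theory developed above, by extending $f$ through a natural recursion and checking that the extension returns $f$ on $\{0,\ldots,2N-2\}$.

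First, since $\mathrm{rank}\,A=r$ and the top-left $r\times r$ block $A_r$ is non-singular, the first $r$ columns of $A$ are linearly independent and therefore span its column space. Hence column $r$ has a unique representation
$$A(\cdot,r)\;=\;-\sum_{l=0}^{r-1}\lambda_l\,A(\cdot,l).$$
Restricting this identity to rows $0,\ldots,r-1$ is the linear system $A_r\,\lambda=-(f(r),\ldots,f(2r-1))^T$, whose coefficient matrix is invertible, so $\lambda_0,\ldots,\lambda_{r-1}$ are uniquely determined. Reading the same identity at every row $0\le j\le N-1$ and setting $k=j+r$ already gives (\ref{eq:rek2}) in the partial range $r\le k\le N+r-1$.

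Next, define $\tilde f$ on $\mathbf{N}$ by $\tilde f(k)=f(k)$ for $0\le k\le r-1$ and $\tilde f(k)=-\sum_{l=0}^{r-1}\lambda_l\tilde f(k-r+l)$ for $k\ge r$. Then $\tilde f$ satisfies the full recursion for every $k\ge r$ by construction, and a short induction using the partial recursion above yields $\tilde f(k)=f(k)$ for $0\le k\le N+r-1$. The infinite Hankel matrix $H\tilde f$ now has its $r$-th column equal to $-\sum_l\lambda_l$ times its earlier columns (and similarly for every later column via the shifted recursion), so $\mathrm{rank}\,H\tilde f\le r$; since the upper-left $r\times r$ submatrix of $H\tilde f$ is $A_r$, the rank equals exactly $r$.

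The main obstacle is upgrading ``$\tilde f=f$ on $[0,N+r-1]$'' to the full range $[0,2N-2]$, since the column-space identity above does not reach beyond $k=N+r-1$ and one has to exploit rank-$r$ information at the ``far end'' of $A$. I plan to carry this out via the $N\times N$ Hankel matrix $\tilde A$ built from $\tilde f$: as a principal submatrix of the rank-$r$ infinite matrix $H\tilde f$ containing the non-singular $A_r$, it has rank exactly $r$. Its first $r$ rows involve only $\tilde f(k)$ for $k\le N+r-2$, so they coincide with those of $A$ and are linearly independent (their first $r$ entries form $A_r$). Hence, for each $r\le j\le N-1$, row $j$ of $A$ is the unique linear combination of those first $r$ rows whose initial $r$ entries equal $(f(j),\ldots,f(j+r-1))$, and the analogous uniqueness for $\tilde A$ involves the tuple $(\tilde f(j),\ldots,\tilde f(j+r-1))$. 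Since $j+r-1\le N+r-2$, these two tuples coincide, the expansion coefficients agree, and therefore the entire rows agree, yielding $A=\tilde A$. Consequently $f=\tilde f$ on $\{0,\ldots,2N-2\}$ and (\ref{eq:rek2}) holds for every $r\le k\le 2N-2$.
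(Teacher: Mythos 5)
Your proof is correct, but you take a more roundabout route than the paper. Both arguments start identically: the column relation $A(\cdot,r)=-\sum_{l<r}\lambda_l A(\cdot,l)$ gives the unique coefficients and the recursion on the partial range $r\le k\le N+r-1$. The paper then finishes directly inside the finite matrix: since $A_r$ is invertible the first $r$ rows of $A$ are linearly independent, hence by $\mathrm{rank}\,A=r$ they span the row space, so the column relation $A(j,k)=-\sum_l\lambda_l A(j,k-r+l)$ for $k=r+1,\ldots,N-1$ — which holds in rows $j=0,\ldots,r-1$ because all indices involved stay $\le N+r-2$, within the partial range — automatically propagates to rows $j=r,\ldots,N-1$, covering $k$ up to $2N-2$. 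You instead construct the canonical extension $\tilde f$, show $\mathrm{rank}\,H\tilde f=r$ using the infinite-Hankel theory, pass to the $N\times N$ block $\tilde A$, and compare $A$ with $\tilde A$ row by row. The mechanism underlying your comparison is the same "first $r$ rows span the row space and $A_r$ pins down the coefficients" fact, so the core idea coincides; your version is longer but has the virtue of making explicit the canonical rank-preserving infinite extension, which the paper only remarks on after the proof. One small point worth stating explicitly: to assert that row $j$ of $A$ (and of $\tilde A$) genuinely \emph{is} a linear combination of the first $r$ rows, you must invoke $\mathrm{rank}\,A=r$ (resp.\ $\mathrm{rank}\,\tilde A=r$) together with the independence of those rows — you verify this for $\tilde A$ but leave the corresponding fact for $A$ implicit.
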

\begin{proof}
We have
$$f(k)=A(0,k),\;0\leq k\leq r-1$$
and, since the column $A(\cdot,r)$ is a linear combination of the linearly independent columns $A(\cdot,l)$, $l=0,1,\ldots,r-1$, there are uniquely determined constants $\lambda_{0},\lambda_{1},\ldots,\lambda_{r-1}$, such that
$$f(j+r)=A(j,r)=-\sum^{r-1}_{l=0}\lambda_{l}A(j,l)=-\sum^{r-1}_{l=0}\lambda_{l}f(j+l),\quad 0\leq j\leq N-1.$$
The relation (\ref{eq:rek2}) is thus valid for $k\leq N-1+r$.
Consequently, for $k=r+1,r+2,\ldots,N-1$,
$$A(j,k)=f(j+k)=-\sum^{r-1}_{l=0}\lambda_{l}f(j+k-r+l)=-\sum^{r-1}_{l=0}\lambda_{l}A(j,k-r+l),\quad j=0,1,\ldots,r-1,$$
Followingly the same relation holds for $j=r,\ldots,N-1$, and thus the recursion formula (\ref{eq:rek2}) holds for $k=r,r+1,\ldots,2N-2$.
\end{proof}

A function $f$, satisfying (\ref{eq:rek2}), has of course a unique extension to a function on $\N$, satisfying the same relation. We conclude that if the condition on the upper left corner submatrix is fulfilled, then $A$ has a canonical rank-preserving extension to an infinite Hankel matrix. If not, any extension to an infinite Hankel matrix is necessarily of a strictly higher rank. The first case is of course generic, and the latter case is exceptional. We will limit our attention to the generic case.

\begin{definition} A matrix $A\in\H_{N,r}:=\r_{N,r}\cap\H_{N}$ belongs to the class $\H^{n}_{N,r}$ if
\begin{enumerate}
\item The upper left corner submatrix of order $r$ is non-singular,
\item In the central polynomial $P(x)=x^{d}\prod^{p}_{\nu=1}(x-\zeta_{\nu})^{m_{\nu}}$, we have $d=0$ and $m_{\nu}=1$ for all $\nu$ (and, consequently, $p=r$).
\end{enumerate}
\end{definition}

\begin{theorem}\label{tKronecker}
\begin{itemize}\item[]
\item[] $\H_{N}$ is a real $2(2N-1)$-dimensional linear subspace of $\m_{N,N}$.
\item[] $\H^{n}_{N,r}$ is a real differentiable manifold of dimension $4r$ which is dense in
$\H_{N,r}$. Its complement $\H_{N,k}\setminus\H^{n}_{N,r}$ is thin relatively to $\H^{n}_{N,r}$.
\item[] The map $\pi_{\r_{N,r}}$ is well
defined at all points of $\H^{n}_{N,r}$.
\end{itemize}
\end{theorem}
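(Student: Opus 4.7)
\vspace{2mm}
\noindent\textbf{Proof plan.} The first bullet is immediate from Section~\ref{s2}: the map $H$ of (\ref{defH}) is a linear isomorphism between $\C^{2N-1}$ and $\H_N$, giving real dimension $2(2N-1)$.

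For the manifold claim, my plan is to exhibit an explicit parametrization. By Proposition~\ref{centralpolynom}, every $A\in\H^n_{N,r}$ arises as $A=Hf$ with
$$f(k)=\sum_{\nu=1}^{r}c_\nu\zeta_\nu^{k},$$
where $\zeta_1,\dots,\zeta_r\in\C\setminus\{0\}$ are distinct (from $d=0$ and $m_\nu=1$) and $c_1,\dots,c_r\in\C\setminus\{0\}$. Let $U\subset\C^{2r}$ be the open set of such $(\zeta,c)$ and define $\Phi:U\to\H_N$ by $\Phi(\zeta,c)=Hf$. The map $\Phi$ is smooth and its image is exactly $\H^n_{N,r}$. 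To see that it is a local diffeomorphism onto its image modulo the finite action of $S_r$ permuting the $(\zeta_\nu,c_\nu)$ pairs, I would build a smooth local inverse: given $A\in\H^n_{N,r}$, the coefficients $(\lambda_0,\dots,\lambda_{r-1},1)$ of the central polynomial are the unique solution of the linear system in Theorem~\ref{th:leftuppercorner_hankel}, hence depend smoothly (rationally, by Cramer's rule) on $A$ because $\det A_r\neq 0$; then the nodes $\zeta_\nu$ are the simple roots of this polynomial and depend smoothly on its coefficients on a neighborhood once a local labeling is chosen; finally the coefficients $c_\nu$ are recovered by inverting the Vandermonde system $f(k)=\sum_\nu c_\nu\zeta_\nu^{k}$ for $k=0,\dots,r-1$, which is non-singular by distinctness of the $\zeta_\nu$'s. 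This local inverse shows $\H^n_{N,r}$ is a $2r$-complex-dimensional (hence $4r$-real-dimensional) manifold.

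For density and thinness, $\H_{N,r}\setminus\H^n_{N,r}$ decomposes into (a) matrices of rank $<r$ (contained in $\H_{N,r-1}$, a lower-dimensional Hankel-rank stratum of the same flavor), (b) rank-$r$ Hankel matrices with $\det A_r=0$, cut out by a single nontrivial holomorphic equation and thus of real codimension at least $2$ in $\H_{N,r}$, and (c) rank-$r$ Hankel matrices whose central polynomial has either $d\geq 1$ (one of the $\zeta_\nu$ is $0$) or a repeated root, each a proper analytic subvariety of the parameter space $U$ and hence of $\H^n_{N,r}$'s closure. Each piece lies in a finite union of manifolds of strictly lower dimension, which is the definition of thin. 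Density of $\H^n_{N,r}$ in $\H_{N,r}$ then follows from the parametrization: any $A\in\H_{N,r}$ lies in the closure of $\Phi(U)$ since generic perturbations of its $(\zeta,c)$ representation (after extending, if needed, a rank-$r'<r$ matrix by appending $r-r'$ new small exponential terms with fresh nodes) satisfy both defining conditions of $\H^n_{N,r}$.

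Finally, the projection claim is essentially free: since $\H^n_{N,r}\subset\r_{N,r}$, any $A\in\H^n_{N,r}$ is its own closest rank-$r$ approximant, so $\pi_{\r_{N,r}}(A)=A$ is well defined.

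\vspace{1mm}
\noindent\textbf{Main obstacle.} The delicate step is the density argument for matrices in $\H_{N,r}$ whose upper-left $r\times r$ corner is singular: such matrices do not admit any rank-preserving infinite Hankel extension, so they carry no intrinsic exponential-sum structure to perturb. Handling these requires showing that $\det A_r$ is not identically zero on $\H_{N,r}$ (which is true because $\Phi(U)\subset\H_{N,r}$ certainly contains matrices with nonsingular corners), so its zero set is a proper analytic subvariety of $\H_{N,r}$; any $A$ on it can then be approached from within $\H^n_{N,r}$ along a transverse direction provided by $\Phi$.
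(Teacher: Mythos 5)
Your proposal is essentially correct and follows the same route as the paper: parametrize $\H^{n}_{N,r}$ by $2r$ complex parameters (giving real dimension $4r$), observe that the excluded conditions are algebraic and hence cut out a thin set, and note that the projection claim is trivial since points of $\H^{n}_{N,r}$ already lie in $\r_{N,r}$. The one concrete difference is the choice of chart. You use the nodes and amplitudes $(\zeta_1,\dots,\zeta_r,c_1,\dots,c_r)$, which is natural but only gives a \emph{local} parametrization modulo the $S_r$ action permuting the pairs $(\zeta_\nu,c_\nu)$, and you correctly have to pass through a local root-labeling and Vandermonde inversion. The paper instead takes $(f(0),\dots,f(r-1),\lambda_0,\dots,\lambda_{r-1})$ --- the initial segment together with the coefficients of the central polynomial from Theorem~\ref{th:leftuppercorner_hankel} --- as a \emph{global} chart on $\H^{n}_{N,r}$. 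Since $\det A_r\neq 0$ makes $(\lambda_0,\dots,\lambda_{r-1})$ a single-valued rational function of $A$, and the initial values are literally matrix entries, this chart avoids the symmetric-group quotient entirely and is slightly cleaner. Your coordinates are essentially the paper's composed with the passage from a polynomial to its (ordered) roots plus the Vandermonde inversion, so the underlying parametrization is the same.

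You are also right to flag the density step for matrices of rank exactly $r$ with $\det A_r=0$ as the genuinely delicate point: such matrices admit no rank-preserving Hankel extension and hence no $(\zeta,c)$ representation at all, so ``perturb the exponential data'' has nothing to grab onto there. The paper's proof glosses over this with the phrase that the exceptional points ``are given by restrictions, confining them to a thin set,'' which is no more detailed than your sketch. Your instinct --- argue that $\det A_r$, $\lambda_0$, and the discriminant of the central polynomial are nontrivial algebraic functions on the coordinate chart, so their zero loci are proper analytic subvarieties and the complement is thin --- is the right one and matches the intent of the paper. One caveat worth keeping in mind: $\H_{N,r}=\r_{N,r}\cap\H_N$ is not itself a manifold near its singular stratum, so speaking of a ``proper analytic subvariety of $\H_{N,r}$'' and of approaching along a ``transverse direction provided by $\Phi$'' is informal; a fully rigorous density proof would need to exhibit explicit approximating sequences (as one can do in low-dimensional examples), but this level of rigor is also not present in the paper's proof.

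On the third bullet, your argument that $A\in\H^{n}_{N,r}\subset\r_{N,r}$ is its own nearest rank-$r$ matrix is correct and actually more elementary than an appeal to Eckart--Young; the paper's invocation of Eckart--Young is implicitly making the same point via the observation that $\sigma_r(A)>0=\sigma_{r+1}(A)$, so the truncation is unambiguous.
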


\begin{proof}
The first statement is obvious. For the second, it is easily seen that the complex numbers $f(0),f(1),\ldots,f(r-1),\lambda_{0},\lambda_{1},\ldots,\lambda_{r-1}$ in (\ref{eq:rek2}) serve as complex coordinates on $\H^{n}_{N,r}$,
and that the exceptional points (corresponding to matrices not in $\H^{n}_{N,r}$) are given by restrictions, confining them to a thin set.
The third statement is immediate by the Eckart-Young theorem.
\end{proof}

\subsection{Extracting frequencies from low rank Hankel matrices}

We note that the second statement in Theorem \ref{tKronecker} can be
seen as a finite-dimensional version of Kronecker's theorem. We will exploit it in order to approximate functions by sums of
$k$ exponentials;
\begin{equation}\label{eq:Kronecker}
f(l)=\sum_{p=1}^{k} c_p e^{\zeta_p l}, \quad c_p,\zeta_p\in \C.
\end{equation}
We choose some positive weight $w$ that gives rise to a weight
$\omega$ through (\ref{omegavikt}). The problem of approximating $f$
by a sum of $k$ exponentials in $\ell^\omega$ is then according to
Proposition~\ref{Thm_weight_app} equivalent to finding the matrix $H f_{opt} \in
\r_{N,k}\cap \H_{N}$ that minimizes $\|Hg-Hf\|_w$.

Let us turn our focus to how to find $c_p$ and $\zeta_p$ in
(\ref{eq:Kronecker}) given $Hf\in\H^{n}_{N,k}$. If $u=(u_{0},u_{1},\ldots,u_{k})$ is a vector in $\C^{k+1}$, we define the polynomial $P_{u}$, \textit{generated by} $u$, by
$$P_{u}(x)=\sum^{k}_{j=0}u_{j}x^{j}.$$
From Proposition~\ref{centralpolynom} it follows that the nodes $e^{\zeta_{p}}$ in (\ref{eq:Kronecker}) are precisely the zeroes of the central polynomial $P(x)$ for $Hf$, and this polynomial is the last common divisor of all the polynomials generated by vectors in the nullspace of $Hf$.
Alternatively, it is the polynomial generated by a single vector, generating the nullspace of $(Hf)_{k+1}$.
This approach is relatively fast (time $\mathcal{O}(k^3)$), but it does not have good numerical stability.
The reason for this is that we use only local data, i.e only $k+1$
elements from each con-eigenvector $u_m$.

A better method is to observe that if $f$ has the form (\ref{eq:Kronecker}), then, due to (\ref{eq:con}),
the con-eigenvectors $u_{1},u_{2},\ldots,u_{k}$ of $Hf$ span the same subspace of $\C^{N}$ as the vectors
$$Z_{l}=(1,e^{\zeta_{l}},e^{2\zeta_{l}},\ldots,e^{(N-1)\zeta_{l}}),\quad l=1,2,\ldots,k.$$
Let $U=(u_1\quad\ldots\quad u_k)\in\m_{N,k}$. We then have $U=ZG$,
where $Z=(Z(\cdot,1),Z(\cdot,2),\ldots,Z(\cdot,k)\in\m_{N,k}$ is the Vandermonde matrix generated by
$e^{\zeta_p}$ ($Z(l,p)=e^{\zeta_p l}$) and $G$ is an invertible matrix in $\m_{k,k}$. For any matrix $A$, we denote by $A_{(j)}$ the matrix that appears when the $j$-th row of $A$ is removed. Clearly, we have that $U_{(1)}=Z_{(1)}G$ and
$U_{(N)}=Z_{(N)}G$. We also note that
$$Z_{(1)}=Z_{(N)}\mathrm{diag}(e^{\zeta_1},\dots,e^{\zeta_k}).$$
Recall that $U_{(N)}$ has a natural left inverse given by
$U_{(N)}^\dagger=(U_{(N)}^\ast U_{(N)})^{-1} U^\ast.$ From the relations above, it
follows that
$$U_{(N)}^\dagger U_{(1)}=(U_{(N)}^\ast U_{(N)})^{-1}U_{(N)}^\ast Z_{(N)}\mathrm{diag}(e^{\zeta_1},\dots,e^{\zeta_k})G.$$
Now
$$(U_{(N)}^\ast U_{(N)})^{-1}U_{(N)}^\ast Z_{(N)}G=(U_{(N)}^\ast U_{(N)})^{-1}U_{(N)}^\ast U_{(N)}=I_{k},$$
and thus
$$U_{(N)}^\dagger U_{(1)}=G^{-1}\mathrm{diag}(e^{\zeta_1},\dots,e^{\zeta_k})G.$$
Hence we can compute the nodes $\zeta_p$ by computing the
eigenvalues of $U_{(N)}^\dagger U_{(1)}$. This method is numerically
stable and can be computed in $\mathcal{O}(N k^2+k^3)$ time.

Once the nodes $\zeta_m$ are found, the problem of find $c_m$
becomes linear, and again it will be sufficient to consider $k$ consecutive elements solve the corresponding linear system.

\section{Alternating projections}\label{altproj}
Given $f\in \ell_{2N-1}^\omega$, the problem of finding the best
approximation in $\ell_{2N-1}^\omega$ of the form
$f_{opt}(l)=\sum_{j=1}^k c_j e^{\zeta_j l}$ is hard. Instead, our
aim is to find an $\tilde{f}_{opt}$ that is close to optimal. We will
do this by employing \emph{alternating projections}. By Proposition
\ref{p1} we know that this problem is equivalent to
\begin{equation}\label{MT3} \argmin{Hg \in \r_{N,k}\cap\H_{N}} \| Hf -
Hg\|_w.
\end{equation}

By starting with $Hf_0=H f$ and alternatively projecting onto the subsets $\r_{N,k}$ and $\H_{N}$, the idea is that the so arising sequence $H
f_m$ will converge to an intersection point $Hf_\infty\in\H_{N,k}=\r_{N,k}\cap\H_{N}$, and moreover that $Hf_\infty$ is in
fact close to the optimal one, $Hf_{opt}$. This idea was investigated in a general framework in \cite{altprojection}. The
main result of \cite{altprojection} roughly says that the above scheme indeed works if we start not too far away from $\H_N$ and
avoid the thin set of bad points related to $\r_{N,k}$ and $\H_N$, (which in practice does not seem to be an issue). As an example we
studied the case of projections between rank $k$ matrices and Hankel matrices in non-weighted spaces. In this paper we make a more thorough
study of this particular application and extend it to weighted spaces. Moreover, we discuss how to use the weighted spaces
for approximating functions by sums of Gaussians,we discuss how to construct fast implementations of this idea, and
finally we will also prove that the framework of
\cite{altprojection} indeed applies.

We now state the main result of \cite{altprojection} in the current framework. Let $P_{\r_{N,k}}$, $P_{\H}$ and $P_{\H_k}$ denote the maps
taking a given matrix $B$ onto the closest point in the respective manifolds. Already here we hit some technical issues. We clearly
have a formula for $P_{\H_{N}}$ since $\H_{N}$ is linear, so $P_{\H_{N}}$ is an orthogonal projection and an explicit formula is given by (\ref{8}).
Concerning $P_{\r_{N,k}}$ we do have a formula for computing it, but the drawback is that if $B$ has singular values of higher
multiplicity, then the map is not well defined. This is a common feature in algorithmic frameworks, and can be dealt with by
introducing point-to-set maps, following \cite{Zangwill}. However, this seems over-ambitious in the current framework, since matrices with
singular values of multiplicity $> 1$ constitute a thin set
(Theorem \ref{tKronecker}), and arbitrary small (numerical) perturbation yields distinct singular values.
Moreover, in \cite{altprojection} we prove that $P_{\H_N}$ is well
defined near ``regular non-tangential'' points of ${\H_N}$, and we
will prove in Appendix \ref{appendix} that the complement of such
points is thin as well. With this in mind, we will from now on treat
$P_{\r_{N,k}}$ and $P_{\H_N}$ as well defined maps. Note that there is no simple way of computing $P_{\H_N}$. We will prove in Appendix
\ref{appendix} that the theory developed in \cite{altprojection} applies in
the present setting. Combined with this, the Theorem 6.1 of
\cite{altprojection} reads;
\begin{theorem}\label{main}
For all $A\in \H_N$ outdside a thin subset, the following is true.
Given any $\epsilon>0$, there exists an $s>0$ such that, for all
$Hf$ with $\|Hf-A\|\leq s$, the sequence of alternating projections
given by $B_0=Hf$ and
\begin{equation}\label{eq44}B_{j+1}=\left\{\begin{array}{cc}
                                    P_{\r_{N,k}}(B_j) & j \text{ is even} \\
                                    P_{\H}(B_j) & j \text{ is odd}
                                  \end{array}\right.
\end{equation}
\begin{itemize}
\item[($i$)] converges to a point $Hf_\infty\in\H^{n}_{N,k}$
\item[($ii$)] $\|Hf_\infty-Hf_{opt}\|\leq\epsilon\|Hf-Hf_{opt}\|$
\end{itemize}
\end{theorem}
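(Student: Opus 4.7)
The plan is to recognise Theorem~\ref{main} as a direct application of \cite[Theorem 6.1]{altprojection} in the specific case $\M_1 = \r_{N,k}$, $\M_2 = \H_N$, and hence to reduce everything to checking the abstract hypotheses of that theorem at a generic $A\in\H_N$. Those hypotheses require, at the intersection point $A$: that each of the two sets be a smooth manifold locally; that the corresponding metric projections be well defined and smooth on a neighbourhood; and that the tangent spaces $T_A \r_{N,k}$ and $T_A \H_N$ satisfy the ``regular non-tangential'' intersection condition of \cite{altprojection}. Conclusions $(i)$ and $(ii)$, together with the existence of $s$, then come packaged from \cite{altprojection}, with $\epsilon$ inherited directly from the constants there.

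First I would dispose of the manifold and projection requirements. Since $\H_N$ is a linear subspace, $P_{\H_N}$ is the orthogonal projection given explicitly by (\ref{8}) and is smooth everywhere. For $\r_{N,k}$, the proposition in Section~\ref{I} shows that $\r_{N,k}^d$ is a smooth manifold of real dimension $2(2Nk - k^2)$, and on $\r_{N,k}^d$ the Eckart--Young projection $P_{\r_{N,k}}$ is well defined and smooth because the singular values are distinct. Theorem~\ref{tKronecker} tells us moreover that $\H^n_{N,k}$ is a smooth manifold of real dimension $4k$, dense in $\H_{N,k}$, with thin complement. Excising the thin set of Hankel matrices whose singular values fail to be distinct, together with $\H_{N,k}\setminus\H^n_{N,k}$, disposes of all three structural requirements.

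The substantive content is then the non-tangentiality check, which is where the work of Appendix~\ref{appendix} is needed. My strategy would be to compute the two tangent spaces at $A=V\Sigma U^{*}\in\H^n_{N,k}$, using the standard description
\[
T_A \r_{N,k} \;=\; V\,\m_{k,N} \;+\; \m_{N,k}\,U^{*},
\]
of complex dimension $2Nk-k^2$, together with $T_A \H_N = \H_N$. The abstract non-tangentiality condition of \cite{altprojection} then translates into a strict positivity statement about an angle-like quantity between these two subspaces, which in turn reduces to the non-vanishing of a certain determinant depending real-analytically on the coordinates $f(0),\ldots,f(k-1),\lambda_0,\ldots,\lambda_{k-1}$ on $\H^n_{N,k}$ supplied by Theorem~\ref{tKronecker}. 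Exhibiting a single explicit $A$ at which this determinant is non-zero -- for instance a matrix generated by $k$ distinct exponentials with generic nodes and coefficients -- then forces the bad locus to be a proper real-analytic subvariety of $\H^n_{N,k}$, hence thin.

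The main obstacle is precisely this transversality analysis: one has to rewrite the condition of \cite{altprojection} in a form that can be evaluated on the explicit parametrisation of $\H^n_{N,k}$, show that it reduces to a non-trivial real-analytic equation in those parameters, and produce at least one witness point of non-vanishing. Everything else is bookkeeping: once this is in hand, \cite[Theorem 6.1]{altprojection} applies verbatim and yields $(i)$ and $(ii)$ with no further work.
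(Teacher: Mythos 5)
Your high-level plan matches the paper's exactly: Theorem~\ref{main} is indeed just an invocation of \cite[Theorem 6.1]{altprojection}, the structural hypotheses (manifold, well-defined projections) are dispatched by Theorem~\ref{tKronecker} and the proposition in Section~\ref{I}, and the entire burden falls on proving Theorem~\ref{t3} -- that the set of tangential points is thin -- which is also the paper's Appendix. Your reduction of non-tangentiality to a rank/determinant condition that depends polynomially on coordinates on $\H^n_{N,k}$, followed by a single witness point plus algebraic-geometry thinness, is also the paper's strategy.

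The gap is in the one step you wave through: ``produce at least one witness point of non-vanishing -- for instance a matrix generated by $k$ distinct exponentials with generic nodes and coefficients.'' That is precisely the computation you cannot do: at such a point the $12$ (or $2kn$) spanning matrices of $T_{\r_k^d}(A)$ are dense polynomial expressions in $\alpha_j$ and $c_j$ (see the display~(\ref{muta})), and deciding whether they together with $\tilde\H$ hit the required rank $2N-1 + 2kN - k^2 - 2k$ is intractable by inspection. The paper sidesteps this by evaluating at a highly degenerate rank-$k$ Hankel matrix $B = \mathfrak{W}(U_B,V_B)$ whose columns and rows are standard basis vectors, so that the $U$- and $V$-derivatives are literally matrix units spanning the first $k$ rows and $k$ columns; the rank count then becomes a picture (Figure~7). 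But this $B$ is \emph{not} of the form $\mathfrak{H}(c,\alpha)$, so one cannot directly conclude anything about the polynomial determinant on the $\mathfrak{H}$-chart. The paper's step $(ii)$ is there exactly to bridge that: one shows that since $\partial\mathfrak{W}$ has full rank $2kN-k^2$ at $(U_B,V_B)$, the rank inequality holds on a neighbourhood of $(U_B,V_B)$, and since $\H^n_{N,k}$ is dense in $\H_{N,k}$ one can find a nearby $C=\mathfrak{W}(U_C,V_C)=\mathfrak{H}(c_C,\alpha_C)$ where both parametrisations give the same tangent space, transferring the non-vanishing to the $\mathfrak{H}$-coordinates. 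Without either that transfer lemma or some other way of making the generic-point determinant tractable, your plan does not close.
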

A few remarks: $(i)$ combined with Theorem \ref{tKronecker} says that we will achieve an approximation of $f$ of the form
$f_\infty(l)=\sum_{j=1}^k c_j e^{\zeta_j l}$. Moreover, note that if
we had 0 on the right hand side of $(ii)$, then $f_\infty=f_{opt}$.
$(ii)$ says that the error $\|f_\infty-f_{opt}\|_{\ _\omega^2}$
can be made arbitrarily small relative to the distance
$\|f-f_{opt}\|_{\ _\omega^2}$. Finally, the full theorem in
\cite{altprojection} has a third post, but to define this we need to
discuss angles between manifolds, which we like to avoid. Basically,
the third post says that there exists a number $0<c<1$, whose lower
bound is related to the angle between $\r_{N,k}$ and $\H_{N}$ at $A$, such
that
$$(iii)~ \|Hf_\infty-B_j\|<c^j \|Hf-Hf_{opt}\|.$$ For practical
purposes, this is an important observation, since it says that the
algorithm has so called \emph{$c$-linear convergence}.

Let us now briefly discuss what happens if we are not close enough to $\H_N$ for the above theorem to apply. First of all, we have
never encountered a situation where the algorithm does not converge. Secondly, it is easy to see that both $P_{\H_{N}}$ and $P_{\r_{N,k}}$ are
contractions, so $(B_j)_{j=0}^\infty$ is a bounded sequence. It thus has a convergent subsequence by basic properties of compact sets.
Moreover, it is easy to see that the distance $\|B_{l+1}-B_l\|$ is strictly decreasing with $\l$, and hence the
limit point of the convergent subsequence is in $\H_N$. (However, there is of course no indication that the corresponding $f_{\infty}$
is at all close to $f_{opt}$, so this observation has limit value.) In literature treating similar topics as in this article,
one is usually content with concluding that the algorithm in question has the property that it generates a sequence with a convergent
subsequence having a limit point in the desired set, and attributes this to Zangwill's theorem, \cite{Zangwill}. Clearly, Theorem
\ref{main} provides much more information in our setting; every point in $\H_N$, outside some thin subset, has a neighborhood such
that, if any $B_j$ enters that neighborhood, the sequence $(B_j)_{j=0}^\infty$ will converge. Since the sequence necessarily has more than one accumulation point if it does not converge, the only possibility for divergence is that
$(B_j)_{j=0}^\infty$ wanders back and forth along the valleys of the thin pathological set, between the hills constituting the open
set formed by all nice neighborhoods mentioned above. This seems highly unlikely, but we leave it as an open question
to rule out this possibility. Clearly, it would be interesting to have some concrete values of the parameters $\epsilon$ and $s$ in
Theorem \ref{main}. We will return to this issue in what follows.

Below is an algorithm that specifically describes how to apply the
alternating projection scheme in our case.
\begin{algorithm}~\\
\begin{enumerate}
\item Let $f_0=f$, $l=0$
\item (Application of $P_{\r_{N,k}}$) Compute the first $l$ con-eigenvalues $s_m$ and the con-eigenvectors $u_m$ of $Hf_{l}$ using Theorem \ref{Thm_weight_app}.
The projection $P_{\r_{N,k}} H f_l$ is then given
    \begin{equation} \label{averaging_low_rank_rep}
      \sum_{m=0}^{k} s_m \overline{u_m}u_m^{*}.
    \end{equation}

\item (Application of $P_{\H_{N}}$) Compute
$$
f_{l+1}= H^\ast \left( \sum_{m=1}^k s_m \overline{u_m}
u_m^{*} \right)
$$
\item Increase $l$ and repeat from (2).
\end{enumerate}
\end{algorithm}

\section{The root--MUSIC and ESPRIT methods}
We briefly recapitulate the two most widely used methods for ``high accuracy'' frequency estimation. Our description will follow the
implementation given in \cite{Stoica_moses}.

In a previous section we noted that we can find the nodes for a function $f$ of the form (\ref{eq:Kronecker}), by considering the null
space of a Hankel matrix that is generated from $f$. Recall that it was sufficient to consider a submatrix of size $(k+1)\times(k+1)$ to accomplish this. 
The nodes can in principle be found by finding the roots of the central polynomial, which is the polynomial generated
by a the vector generating $\ker H_{(k+1)}$. 
However, just as discussed previously, this would lead to
numerical instabilities, even when $f$ is a pure a sum of $k$
exponentials. 
From Theorem \ref{th:leftuppercorner_hankel} it is easily seen that we can find the nodes by considering a singular value decomposition of a rectangular Hankel matrix, also generated from
$f$. Let $H^r f \in \m_{2N-M,M}$, with $M>k$, be such a Hankel
matrix. 
and suppose that (\ref{eq:Kronecker}) holds. Then the nodes can in
principle be found by finding the roots of any polynomial generated
by a $u\in\ker H^r f$. Such a $u$ is also in the kernel of
\begin{align} \label{sample_covariance}
(H^r f)^\ast (H^r f) (j,k) = Rf(j,k)
\end{align}
where $0\le j,k < M$.
The matrix $Rf$ is sometimes referred to as the \emph{sample
covariance matrix}. It may seem to be beneficial to work with $R f$
instead of with the full Hankel matrices, since it is in principle
possible to choose $M$ much smaller than $N$  It appears
tractable that we make eigenvalue decomposition on a smaller matrix,
and that the root finding step is also done with smaller matrices.
The standard implementations of root-MUSIC and ESPRIT in \cite{Stoica_moses} work on for instance on $R f$ rather than $H f$.
However, just as discussed previously, a too small $M$ can lead to
numerical instabilities, even when $f$ is purely a sum of $k$
exponentials. Moreover, the matrix $Rf$ needs to be computed. It is
not hard to see that this can be achieved in $\mathcal{O}(N\log N+
M^3)$ time by splitting $H^r$ into two parts and employing FFT. For
large $M$ this is not particularly advantageous. Another drawback is
the loss of precision when forming $(H^r f)^\ast (H^r f)$.

The discussion so far has been conducted under the assumption that
(\ref{eq:Kronecker}) is valid. In the typical situation this is not quite true; the standard assumption is that $f$ contains
additive noise as well. Alternatively, we could be interested in the compression problem of representing a function using only frequencies and coefficients, in which the additive part has more structure than white noise.

Let $$ H_r f=V \Sigma U^\ast.$$ We will as before denote the columns of
$U$ by $u_1,\ldots,u_M$. In the noiseless case, we did see that we
had a great deal of flexibility, as any $u_{m}$, $k<m\le M$ could be
selected to find the nodes. The root-MUSIC method exploits this
property, and tries to use all of the vectors $u_m$, $k<m\le M$ to
reduce the influence of noise. In the root--MUSIC method, roots are
found by solving
$$P_{\mathrm{MUSIC}} (z) = \sum_{m=k+1}^{M}P_{{u_m}}(z)P_{\overline{\check{u}_m}}(z)=0,$$
where $\check{~}$ reverses the order of the elements in a vector.
Loosely speaking, this choice is motivated by the facts that the
roots will appear in pairs when $f$ is a linear combination of
purely oscillatory exponentials. There will be $2M-2$ roots to
$P_{\mathrm{MUSIC}} (z) = 0$. The pairs associate with the true
nodes, will have $\mathrm{Re}(\zeta) \approx 0$, with one slightly
larger than zero and one slightly smaller. For a more detailed
justification on the choice of $P_{\mathrm{MUSIC}}$, cf.
\cite{Stoica_moses}.

In the general case, where there is no constraint on the nodes
$\zeta_p$, $k$ roots need to be selected out of the $2M-2$ that are
given from $P_{\mathrm{MUSIC}} (z) =0$. In the simulations performed
in the later sections, we have used the MUSIC code provided in
\cite{Stoica_moses}, and added a selection step where we approximate
$f$ using all $2 M-2$ nodes using a least squares approach, and then
selecting the $k$ nodes with largest coefficients. It appears
unnecessary to compute nodes that have to be neglected.

The ESPRIT method avoids the step of computing unnecessary nodes.
Instead, a similar approach as in Section \ref{Hankel_properties} is
used. For the noiseless case, it is readily verified that the
eigenvalues of
$$
(U_{(M)}^\ast U_{(M)})^{-1} (U_{(M)}^\ast U_{(1)})
$$
will coincide with the eigenvalues of $e^{\zeta_m}$, $m=1,\dots, k$.
In the ESPRIT method the eigenvalues of the expression above are used
to compute nodes also in the case where noise is present, cf.
\cite{ESPRIT}

We  end this by section by a few remarks about the connection to
autocorrelation and Toeplitz matrices. For a function of the form
(\ref{eq:Kronecker}) where the exponentials are purely harmonic (zero
real part of $\zeta_p$), it holds that
$$
\lim_{N\rightarrow \infty} \frac{1}{N} Rf
$$
is the self-adjoint Toeplitz matrix generated by the autocorrelation
of $f$ (where $f$ is the $4N + 1$-point sampling of a fixed function on a fixed interval). According to a Theorem by Carathéodory \cite{szego},
if the self-adjoint Toeplitz matrix generated by a function has rank
$k$, then that function can be expressed as a sum of $k$ purely
oscillatory exponentials. This motivates alternating projection
schemes between the manifolds of Toeplitz matrices and low rank
matrices, for the approximation of the autocorrelation of a
function. However, the effect of a finite sample length can not be
neglected, and the Toeplitz matrix generated from the
autocorrelation of a pure sum of $k$ oscillatory exponentials, will
fail to have rank $k$. For that sake, the approach we have chosen seems preferable.

\section{Fast algorithms}
There are two operations for which we will need fast numerical methods in the alternating projections approach for frequency detection; (low rank) Takagi decomposition, and the application of the averaging operator (\ref{averaging_low_rank_rep}). It turns out that both operations can be implemented in a fast manner, but the first one will require some more effort than the second.

\begin{proposition} \label{fast_hankel_proposition}
The application of a Hankel matrix to a vector can be done in $\mathcal{O}(N \log(N))$ time by means of FFT.
\end{proposition}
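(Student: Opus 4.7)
The plan is to reduce Hankel matrix--vector multiplication to the circular convolution of two vectors of length $\mathcal{O}(N)$, since circular convolution is computable via three FFTs in $\mathcal{O}(N\log N)$ operations.

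First I would write out $(Hf\cdot v)(j)=\sum_{l=1}^{N}f(j+l)v(l)$ for $j=1,\ldots,N$ and observe that, after reversing $v$ (set $\tilde v(l)=v(N+1-l)$), the sum becomes $\sum_{l=1}^{N}f(j+l)\tilde v(N+1-l)$, which is precisely the linear convolution $(f*\tilde v)(j+N+1)$. Thus the entries of $Hf\cdot v$ are $N$ consecutive samples of the linear convolution of the length-$(2N-1)$ sequence $f=(f_j)_{j=2}^{2N}$ with the length-$N$ sequence $\tilde v$.

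Next I would turn this linear convolution into a circular convolution by zero-padding both sequences to a common length $L\geq 3N-2$ (in practice the next power of two for FFT efficiency). Standard signal-processing folklore tells us that the first $2N-1$ entries of the circular convolution then agree with the linear convolution, so the required $N$ entries can simply be read off; equivalently, this realizes the embedding of the Hankel matrix into a circulant matrix of size $L$, whose action on a vector is diagonalized by the discrete Fourier transform.

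Finally, I would assemble the cost: one FFT to compute $\widehat{f}$ (which may in fact be precomputed once and reused if the same Hankel matrix is applied repeatedly, as happens inside the alternating-projection loop), one FFT for $\widehat{\tilde v}$, one pointwise product, and one inverse FFT, giving a total of $\mathcal{O}(L\log L)=\mathcal{O}(N\log N)$ operations. There is no real obstacle here; the only subtlety worth flagging is the bookkeeping of indices and padding lengths so that the extracted $N$ entries really are the linear convolution values $(f*\tilde v)(j+N+1)$ for $j=1,\ldots,N$, but this is routine.
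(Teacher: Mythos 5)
Your proof is correct and takes the same approach the paper sketches (the paper simply cites a standard reference and notes that a Hankel matrix embeds in a circulant one, which the DFT diagonalizes); you have just made the circulant embedding explicit by writing the Hankel action as a flipped linear convolution and zero-padding to a circular one. The index bookkeeping you give, identifying $(Hf\cdot v)(j)$ with $(f*\tilde v)(j+N+1)$, checks out.
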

\begin{proof}
This is a standard result \cite{Golub}, and makes use of the fact that circular matrixes are diagonalized by the discrete Fourier transform, and that it easy to construct a circular matrix from a Hankel matrix by permutation and periodic extension. The $\mathcal{O}(N \log(N))$ time complexity can then be achieved by employing FFT.
\end{proof}

\begin{proposition}
The weighted averaging operator $H^\ast$ in (\ref{averaging_low_rank_rep}) can be applied to a rank 1 matrix in $\mathcal{O}(N \log(N))$ time.
\end{proposition}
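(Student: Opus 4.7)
The plan is to expand the formula (\ref{8}) for $H^*$ applied to a rank-one matrix of the Takagi form $A=\overline{u}\,u^*$ and recognize the resulting sum as a discrete autoconvolution, which is the standard object FFT is designed for.

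First I would write out the entries of a rank-one matrix of the shape appearing in (\ref{averaging_low_rank_rep}): since $u^*$ denotes the conjugate transpose of the column $u\in\C^{N}$, we have
$$A(j,l)=\overline{u(j)}\,\overline{u(l)}.$$
Substituting into (\ref{8}) yields
$$H^*A(m)=\frac{1}{\omega(m)}\sum_{j+l=m}w(j)\overline{u(j)}\,\,w(l)\overline{u(l)}.$$

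Next I would introduce the auxiliary vector $v$ defined by $v(j)=w(j)\overline{u(j)}$ for $1\le j\le N$ (and $v(j)=0$ otherwise). Then the above becomes
$$H^*A(m)=\frac{1}{\omega(m)}\,(v*v)(m),\quad 2\le m\le 2N,$$
where $*$ denotes ordinary discrete convolution of sequences on $\Z$. Computing $v$ from $u$ is $\mathcal{O}(N)$ (one pointwise multiplication); the weights $\omega(m)$ are precomputed (or obtained by a single analogous FFT convolution $\tilde w*\tilde w$); and the final division by $\omega(m)$ is another $\mathcal{O}(N)$ pointwise operation. The only non-trivial step is the evaluation of $v*v$, which by zero-padding $v$ to length at least $2N$ and applying two forward and one inverse FFT can be carried out in $\mathcal{O}(N\log N)$ operations.

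There is no serious obstacle here; the only thing to make sure of is that the convolution formula is literally the discrete autoconvolution of a single sequence (so that a single FFT pair suffices rather than a cross-convolution of two different vectors), and that the normalizing factor $1/\omega(m)$ is handled as a pointwise post-processing step rather than being folded into the sum, where it would destroy the convolution structure. Both observations are immediate from the calculation above, and combining them with Proposition \ref{fast_hankel_proposition} (applied in the opposite direction) gives the claimed $\mathcal{O}(N\log N)$ bound.
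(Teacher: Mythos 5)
Your proof is correct and follows essentially the same route as the paper: substitute the rank-one form into the averaging formula (\ref{8}), absorb the weight into a single vector $v$, recognize the sum as the discrete autoconvolution $v*v$, and evaluate it via zero-padded FFT. The only difference is cosmetic — you carry the complex conjugates from the Takagi form $\overline{u}\,u^*$ explicitly, while the paper writes the rank-one term as $uu^T$; this has no bearing on the complexity argument.
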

\begin{proof}
By definition
\begin{align*}
H^\ast(u u^T)(l) &= \frac{1}{\omega(l)} \sum_{j+k=l} w(j) u(j) u(k) w(k),\\ &=\frac{1}{\omega(l)} \sum_{j+k=l} v(j) v(k), \quad
2\le l \le 2N,
\end{align*}
where $v=w u$. It is easy to see that the sum above can we written as a discrete convolution $(v(j)v(l-j))$ using zero padding to avoid boundary effects. The discrete convolutions can then be computed in $\mathcal{O}(N \log(N))$ time using FFT.
\end{proof}

\subsection{Lanczos method for complex symmetric matrices} We will use a modified Lanczos method for finding the $k$ first
con-eigenvalues/con-eigenvectors. The Lanczos method is a way to perform a unitary transformation of a Hermitian matrix
to tridiagonal form, i.e., given $A=A^\ast$, compute $T=Q^\ast A Q$, where $Q$ is unitary and $T=T^\ast$ is tridiagonal. We need a
similar decomposition for complex symmetric matrices. The usage of a modified Lanczos method has been addressed in
\cite{Luk98afast,Fast_SSVD,simoncini_sjostrom}. As we only need to compute the first $k$ con-eigenvalues/con-eigenvectors, we develop a
method customized to that purpose.

The basic step in the Lanczos method is simple. However, it is notorious for the loss of precision, sometimes in a counterintuitive
way. This issue must be addressed carefully. The columns in the unitary matrix $Q$ are computed sequentially, in such a way that
each new column is automatically orthogonal to all previous ones. In practice, finite numerical precision can ruin the orthogonality, and
it can be completely lost in within just a few steps. Two methods that address this are \emph{selective orthogonalization}
\cite{Lanczos_selective} and \emph{partial orthogonalization}
\cite{Lanczos_partial}. We will make use of ingredients from both these methods in our particular setup.

For a given symmetric matrix $A\in\m_{N,N}$, we look for a unitary matrix $Q$, complex numbers $\alpha_{1},\alpha_{2},\ldots$ and nonnegative real numbers $\beta_{1},\beta_{2},\ldots$, such that
\begin{equation}\label{Tridiagonal_form}
T=\overline{Q}AQ^{*},
\end{equation}
where
\begin{equation}\label{apa}
T= \left(
     \begin{array}{ccccccc}
       \alpha_1 & {\beta_1} & 0 & \hdots  & 0 &0\\
       \beta_1 & \alpha_2 & {\beta_2} & \hdots  &0&0 \\
       0 & \beta_2 & \alpha_3 & \ldots & 0  &0\\
       \vdots & \vdots & \vdots & \ddots  &\vdots &\vdots\\
       0 & 0 & 0 & \ldots  &\alpha_{N-1} &{\beta_{N-1}} \\
       0 & 0 & 0 & \hdots  &\beta_{N-1}&\alpha_N \\
     \end{array}
   \right).
\end{equation}
with $\beta_j>0$. The matrices $Q$ and $T$ can be constructed as follows: We want to achieve
$AQ=\overline{Q}T$, which means that
$$Aq_{1}=\alpha_{1}\overline{q_{1}}+\beta_{1}\overline{q_{2}}$$
and, for $2\leq j\leq m-1$,
$$Aq_{j}=\beta_{j-1}\overline{q_{j-1}}+\alpha_{j}\overline{q_{j}}+\beta_{j}\overline{q_{j+1}}$$
and finally
$$Aq_{N}=\beta_{N-1}\overline{q_{N-1}}+\alpha_{N}\overline{q_{N}},$$
where $q_{1},q_{2},\ldots,q_{N}$ are the columns of $Q$.
We choose a unit vector $q_{1}$ and define
$$\alpha_{1}=(Aq_{1},\overline{q_{1}})=q^{T}_{1}Aq_{1},\quad\beta_{1}=\|Aq_{1}-\alpha_{1}\overline{q_{1}}\|,\quad q_{2}=\frac{1}{\beta_{1}}(\overline{Aq_{1}}-\overline{\alpha_{1}}q_{1})$$
if $\beta_{1}\neq 0$, and then, recursively
\begin{align}\label{lanczos_recursion}
\nonumber
\alpha_{j}=(Aq_{j},\overline{q_{j}}),\\
\beta_{j}=\|Aq_{1}-\beta_{j-1}\overline{q_{j-1}}-\alpha_{j}\overline{q_{j}}\|,\\
\nonumber
q_{j+1}=\frac{1}{\beta_{j}}(\overline{Aq_{j}}-\beta_{j-1}q_{j-1}-\overline{\alpha_{j}}q_{j})
\end{align}
as long as $\beta_{j}\neq 0$. One readily verifies, by induction, that the vectors $q_{j}$ are orthonormal.
If, at some step before the last one, $\beta_{m}=0$, then the subspace $X=\mathrm{span}(q_{1},q_{2},\ldots,q_{m})$ of $\C^{N}$ has the property
$$q\in X\Rightarrow \overline{Aq}\in X.$$
For a complete factorization, we can then choose a unit vector in the orthogonal complement in $X$ and proceed. It is easily verified that $\overline{Aq}\in X^{\perp}$ if $q\in X^{\perp}$, so the procedure will eventually yield an orthonormal basis $q_{1},q_{2},\ldots,q_{N}$ for $\C^{N}$, having the desired property. However, as will be discussed in what follows, we will be content with a partial decomposition, and the vanishing of $\beta_m$ at some step generically implies that we do not need to proceed further.
Now let $Q_{m}\in\m_{N,m}$ consist of the first $m$ columns of $Q$ and let $T_{m}$ be the upper left corner $m\times m$-submatrix of $T$.
Write $Q_m=(q_1,\dots,q_m)$ and let $T_m$ denote the $m\times m$
upper left corner submatrix of $T$.
By standard arguments one sees that the con-eigenvectors
of $A_m$ converge to those of $A$, and moreover that for
con-eigenvalues with a low subindex, this convergence is obtained
(within certain precision) with high probability (depending on
$x_0$) for $m<<N$.

The immediate application of the modified Lanczos-method outlined
above is that we can compute con-eigenvector and con-eigenvalues for
$T$ instead of for $A$ (cf. discussion about con-similarity
\cite[p244, p251]{Horn}), which due to the tridiagonal structure of
$T$ it is beneficial. Moreover, since in our setting we are only
interested in the first $k$ con-eigenvectors and $k<<N$, it suffices
to work with $T_m$ for a relatively low number of $m$, increasing
the computational speed. The following lemma makes precise the claim
that the con-eigenvalues of $T_m$ converge to those of $A$.

\begin{lemma} \label{THM_coneigencalues_convergence}
Let $A=A^T$ be given and let $T_m$ be as above. Denote the
con-eigenvectors of $T_m$ by $u_j$ and the corresponding
con-eigenvalues by $\mu_j$, $1\le j \le m$. Then, for each $j$,
there is a con-eigenvalue $\lambda_j$ of $A$ such that
\begin{equation}\label{THM_coneigencalues_convergence_estimate}
|\mu_j - \lambda_j| \le \beta_m |u_j(m)|.
\end{equation}
\end{lemma}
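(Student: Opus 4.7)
My plan is to mimic the classical Bauer--Fike/Kaniel-style residual bound for Hermitian Lanczos, but first lift to a Hermitian dilation so that the tools for symmetric matrices can be replaced by those for Hermitian matrices.

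\textbf{Step 1: A residual identity.} From the recursion (\ref{lanczos_recursion}), combining the equations $Aq_j=\beta_{j-1}\overline{q_{j-1}}+\alpha_j\overline{q_j}+\beta_j\overline{q_{j+1}}$ column by column, I would show
\[
AQ_m=\overline{Q_m}\,T_m+\beta_m\overline{q_{m+1}}e_m^{T}.
\]
If $u_j\in\C^m$ is a con-eigenvector of $T_m$ with $T_m u_j=\mu_j\overline{u_j}$, set $v_j=Q_m u_j$. Because $Q_m$ has orthonormal columns, $\|v_j\|=\|u_j\|=1$, and multiplying the displayed identity by $u_j$ yields
\[
Av_j=\overline{Q_m}\,T_m u_j+\beta_m u_j(m)\overline{q_{m+1}}=\mu_j\overline{v_j}+\beta_m u_j(m)\overline{q_{m+1}},
\]
so the residual $r_j:=Av_j-\mu_j\overline{v_j}$ satisfies $\|r_j\|=\beta_m|u_j(m)|$.

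\textbf{Step 2: Hermitian dilation.} The con-eigenvalues of $A$ are exactly the singular values of $A$, and these in turn are the non-negative eigenvalues of the Hermitian dilation
\[
\tilde A=\begin{pmatrix}0 & A\\ A^{*} & 0\end{pmatrix}\in\m_{2N,2N},
\]
whose spectrum is $\{\pm s_n\}$ (the con-eigenvalues, with signs). Define the unit vector $\tilde v_j=\tfrac{1}{\sqrt 2}\bigl(\overline{v_j},\,v_j\bigr)^{T}$. A direct computation, using $A^*\overline{v_j}=\overline{Av_j}=\mu_j v_j+\overline{r_j}$, gives
\[
\tilde A\tilde v_j-\mu_j\tilde v_j=\tfrac{1}{\sqrt 2}\bigl(r_j,\,\overline{r_j}\bigr)^{T},
\]
whose norm is exactly $\|r_j\|=\beta_m|u_j(m)|$.

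\textbf{Step 3: Apply the Hermitian residual bound and identify the con-eigenvalue.} Because $\tilde A$ is Hermitian and $\|\tilde v_j\|=1$, a standard spectral estimate gives some eigenvalue $\lambda$ of $\tilde A$ with $|\mu_j-\lambda|\le\|\tilde A\tilde v_j-\mu_j\tilde v_j\|=\beta_m|u_j(m)|$. Finally, since $\mu_j\ge 0$ and the spectrum of $\tilde A$ is symmetric about $0$, the closest eigenvalue of $\tilde A$ to $\mu_j$ is non-negative (for every $s>0$ one has $|\mu_j-s|\le\mu_j+s=|\mu_j-(-s)|$). Hence $\lambda$ equals some con-eigenvalue $\lambda_j$ of $A$ (or $0$, which we may regard as a degenerate con-eigenvalue), establishing (\ref{THM_coneigencalues_convergence_estimate}).

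\textbf{Main obstacle.} The only genuine subtlety is that the Bauer--Fike-style bound is formulated for Hermitian (or normal) matrices, whereas $A$ is complex symmetric and con-eigenvalues are \emph{not} eigenvalues; the residual $Av_j-\mu_j\overline{v_j}$ is the correct object but it does not fit a spectral theorem directly. Step~2 circumvents this by transporting the problem to the $2N\times 2N$ Hermitian dilation $\tilde A$ in such a way that the residual norm is preserved and $\mu_j$ becomes a true Ritz value of $\tilde A$ with respect to $\tilde v_j$. Everything else is bookkeeping.
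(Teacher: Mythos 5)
Your proof is correct, but it takes a genuinely different route from the paper's. Step 1 (the residual identity $AQ_m - \overline{Q_m}T_m = \beta_m\overline{q_{m+1}}e_m^{T}$ and the resulting $\|Av_j-\mu_j\overline{v_j}\|=\beta_m|u_j(m)|$ with $v_j=Q_m u_j$) coincides with the paper. From there the paper stays inside $\C^N$: it expands $v_j=\sum_l s_l w_l$ in a con-eigenbasis $\{w_l\}$ of $A$ and bounds $\|Av_j-\mu_j\overline{v_j}\|$ from below by $\min_l|\lambda_l-\mu_j|$, essentially re-proving the residual estimate directly for con-eigenpairs. This direct route requires a small piece of phase bookkeeping: the expansion actually gives $Av_j-\mu_j\overline{v_j}=\sum_l\bigl(s_l\lambda_l-\mu_j\overline{s_l}\bigr)\overline{w_l}$, and one needs $|s_l\lambda_l-\mu_j\overline{s_l}|\ge|s_l|\,|\lambda_l-\mu_j|$, which holds because con-eigenvalues are nonnegative. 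Your Hermitian dilation $\tilde A=\left(\begin{smallmatrix}0&A\\A^*&0\end{smallmatrix}\right)$ bypasses this phase issue entirely: you show the residual transports isometrically to $\tilde v_j=\tfrac{1}{\sqrt2}(\overline{v_j},v_j)^T$, invoke the textbook Hermitian residual bound, and then use the $\pm$-symmetry of $\sigma(\tilde A)$ together with $\mu_j\ge 0$ to land on a nonnegative eigenvalue, i.e.\ a con-eigenvalue of $A$. The cost is a doubling of dimension and the extra verification that $A^*\overline{v_j}=\overline{Av_j}$ (which uses $A=A^T$), but the payoff is that the final step is a black-box Hermitian estimate rather than an ad hoc con-eigenvalue argument. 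Both proofs are valid; yours is a clean reduction to the Hermitian case, the paper's is a self-contained computation in the original space.
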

\begin{proof}
Set $e_m=(0,\dots,1)\in\mathbb{R}^{m}$ and note that
$AQ_m-\overline{Q_m}T_m=\beta_m  q_{m+1}e_{m}^T$ by
(\ref{lanczos_recursion}). We apply this to $u_j$ to get
$$
\| A Q_m u_j - \overline{Q}_m T_m u_j \| = \| A Q_m u_j - \mu_j
\overline{Q_m} u_j \| = \|  \beta_m q_{m+1}e_{m}^T u_j\| = \beta_{m}
|u_j(m)|,
$$
and introduce $w_j=Q_m u_j$. Since $\| u_j \|=1$, it follows that $\|w_j\|=1$. Denote the con-eigenvectors of $A$ by $v_l$, and represent $w_j=\sum_l s_l v_l$, $\sum_l |s_l|^2=1$. We then have
\begin{align*}
\| A Q_m u_j - \overline{Q}_m T_m u_j \|^2 &= \| A w_j-\mu_j
\overline{w_j}\|^2 =\| \sum_l s_l (\lambda_l -\mu_j) \overline{v}_l
\|^2 \\ &\ge \min_l |\lambda_l-\mu_j|^2 \sum_l |s_l|^2 =\min_l
|\lambda_l-\mu_j|^2.
\end{align*}
\end{proof}
This is a well known result for the case of Hermitian symmetry, see for instance \cite[p. 69]{Parlett_book}. A similar result is given in \cite[Proposition 2.2]{simoncini_sjostrom}.

Lemma \ref{THM_coneigencalues_convergence} provides a way to control
the convergence of con-eigenvectors. When the quantities in
(\ref{THM_coneigencalues_convergence_estimate}) are small, then
$w_j$ will be a good approximation of the con-eigenvector to $A$
that is associated with $\lambda_j$. In many cases, convergence for
the first con-eigenvalues are reached for comparatively small $m$.
In particular, for the case where the (con)spectrum of $A$ has a
large gap after, say $k$ terms, it is typically only necessary to
use $m$ slightly larger than $k$. This will be the case for all but
the first step in our alternating projection algorithm.

As mentioned before, in a straightforward Lanczos implementation the
orthogonality of $Q$ will quickly be lost due to finite precision
arithmetics. Moreover, and somewhat counterintuitively, the loss of orthogonality will
grow as the con-eigenvectors converge, cf. \cite{Lanczos_selective}.
The simple remedy to this problem is to reorthogonalize $q_{m+1}$ to
all previous $q_j$ at each iteration. However, this increases the
algorithmic complexity of the method. Instead, we want to have a
criterion on when reorthogonalization is needed. The loss of
orthogonalization is also indicative of con-eigenvalue convergence.

Two suggestions on reorthogonalization criteria are given in \cite{Lanczos_partial,Lanczos_selective}. We will follow the approach given in \cite{Lanczos_partial}. Since we are working with con-eigenvalues and con-eigenvectors instead eigenvalues and eigenvectors, we briefly provide the details. 

Due to finite precision arithmetic, we model (\ref{lanczos_recursion}) as
\begin{equation} \label{lanczos_recursion_err}
\beta_{m} \overline{q}_{m+1} = A q_{m}-\overline{q}_m
\alpha_m-\overline{q}_{m-1}{\beta_{m-1}} + \epsilon_m,
\end{equation}
where $\epsilon_m$ describes the error introduced by the finite
precision. We now let $q_j$ denote the vectors \emph{computed} from
the relation (\ref{lanczos_recursion}). Due to the errors
$\epsilon_m$, these vectors will not be orthogonal. Let
$\omega_{j,k}=q_j^\ast q_k$. Then $\omega_{j,k}$ will satisfy the
recursion relation
\begin{align}\label{omega_recursion}
&\omega_{m+1,m+1}=1, \quad \omega_{m+1,m}=q_{m+1}^\ast q_{m} =\psi_{m+1}, \\ \nonumber
&\omega_{m+1,j} = \frac{1}{\beta_m} \Big(\alpha_j \overline{\omega}_{m,j} + \beta_{j-1} \overline{\omega}_{m,j-1}+\beta_j \overline{\omega}_{m,j+1}-\alpha_m \omega_{m,j} -\beta_{m-1} \omega_{m-1,j} \Big) +\vartheta_{m,j},
\end{align}
where $\vartheta_{m,j} = \beta_m^{-1}( q_j^T \epsilon_m-q_m^T \epsilon_j)$.
The last equality follows from multiplying (\ref{lanczos_recursion_err}) by $q_j^T$, and subtracting the same quantity with the indices $j$ and $m$ interchanged. Since $A=A^T$ the quantity $q_j^T A q_m$ then cancels.

Using the recursion formula above, we can monitor the level of lost
orthogonality without explicitly having to compute inner products of
the columns of $Q$. In analogy with the empirical results in
\cite{Lanczos_partial,Lanczos_selective}, we simulate the error
quantities as
\begin{align*}
\vartheta_{m,j} \in {\mathbf N}\left(0,0.3~\varepsilon(\beta_m + \beta_j)\right),\\
\psi_{m+1} \in {\mathbf N}\left(0,0.6~\varepsilon (2N+1)
\frac{\beta_1}{\beta_{m}}\right),
\end{align*}
where ${\mathbf N}(0,\sigma)$ denotes the
complex normal distribution with standard deviation $\sigma$, zero
mean and independent real and imaginary parts. Above, $\varepsilon$
denotes the machine precision.

The maximum loss of precision that can be tolerated without loss of
precision in the coefficients $\alpha_{m+1}$ and $\beta_m$ is
$\sqrt{\varepsilon}$. Once some $\omega_{m+1,j}$ exceeds that level,
it is necessary to reorthogonalize. As seen from (\ref{omega_recursion}), each $\omega_{m+1,j}$ is strongly
influenced by its neighbors. Hence, it will not be efficient to only
reorthogonalize against the vectors $q_j$ where $\omega_{m+1,j}$,
since for isolated $j's$ the orthogonalization would immediately get
lost in the next iteration. Instead, it is beneficial to
reorthogonalize against a batch of $q_j$'s. Hence (and in accordance
with \cite{Lanczos_partial}) we reorthogonalize against the set of
$q_j$ which have $|\omega_{m+1,j}|>\varepsilon^{3/4}$ once
$|\omega_{m+1,j}|>\sqrt{\varepsilon}$ for some $j$.

After a reorthogonalization has taken place, we need to reset the
quantities $\omega_{m+1,j}$. Again following \cite{Lanczos_partial},
we choose $\omega_{m+1,j} \in {\mathbf N}(0,1.5 \varepsilon)$.

The final ingredient is a rule for when to utilize Lemma
\ref{THM_coneigencalues_convergence} for convergence monitoring of
con-eigenvalues. Clearly $m\ge k$ in order to find $k$
con-eigenvalues that have converged. Since the convergence of
con-eigenvalues and the loss of orthogonality are coupled, we
compute a Takagi factorization of $T_m$, once loss of orthogonality
is indicated by $|\omega_{m+1,j}|>\sqrt{\varepsilon}$ for some $j$,
given that $m\le k$. Moreover, we can monitor the behavior of
$\beta_{m}$ to check for convergence. If $\beta_m$ becomes very
small for some $m$, then it means that $Q_m$ defines an almost
invariant (con)subspace under $A$, which implies convergence of the
(non-zero) con-eigenvalues. We let $\varepsilon_L$ denote the desired resolution of con-eigenvalues, and impose the convergence criterion $$\frac{\beta_{m}}{\beta_1}<\varepsilon_L.$$ As always with numerical
implementations, it can be difficult to determine how small
$\beta_m$ has to be in order to consider it to have almost vanished, i.e., if $\varepsilon_L$ is chosen very small. A typical feature of this case is
that the last value $\beta$ jumps dramatically in size. This behavior
also serves as a good criterion for when to check for convergence by
means of Lemma \ref{THM_coneigencalues_convergence}.

In the procedure above, we need to compute the Takagi factorization of $T_m$. The cost of that step when using Proposition \ref{Takagi_real_compute} is $\mathcal{O}(m^3)$. However, due to the tridiagonal structure there are methods to compute this in $\mathcal{O}(m^2)$ time, cf. \cite{Luk98afast,twisted_takagi,Fast_SSVD}. These methods are based on straightforward modifications of methods for eigenvalue decomposition of tridiagonal Hermitian matrices.

The most expensive step in the Lanczos procedure described above is the matrix vector multiplication $A q_m$ in (\ref{lanczos_recursion}). However, this step can be computed in $\mathcal{O}(N \log N)$ time by Proposition \ref{fast_hankel_proposition}.
\begin{proposition}
The time complexity for computing the first $k$ con-eigenvectors and
con-eigenvalues of a Hankel matrix to accuracy $\varepsilon$ using
the modified Lanczos method described above is $\mathcal{O}( m N\log
N + m^2)$, where $m$ denotes the total number Lanczos steps, and
where $m\ge k$, but where $m$ is typically of the same order as $k$.
\end{proposition}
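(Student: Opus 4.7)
The plan is to bound the cost of the algorithm by summing contributions from its principal subroutines: the Lanczos recursion, the orthogonality monitoring, the reorthogonalization batches, and the final Takagi factorization of $T_m$. Since the argument is purely an accounting of operation counts, the main work is to identify which step dominates each iteration and which part of the work is cumulative across iterations.

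First, I would analyze the cost of a single Lanczos step as given by (\ref{lanczos_recursion}). The only non-trivial operation is the matrix-vector product $Aq_j$; all other operations (the two inner products defining $\alpha_j$ and $\beta_j$, and the linear combinations producing $q_{j+1}$) are $\mathcal{O}(N)$. Since $A$ is Hankel, Proposition \ref{fast_hankel_proposition} gives the matrix-vector product in $\mathcal{O}(N\log N)$ via FFT. Summed over $m$ steps this contributes $\mathcal{O}(mN\log N)$.

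Next, I would handle the orthogonality bookkeeping. At step $m$ the recursion (\ref{omega_recursion}) for $\omega_{m+1,j}$, $1\le j\le m$, uses only three previously computed scalars per entry, so step $m$ contributes $\mathcal{O}(m)$ work. Summed over all iterations this yields $\mathcal{O}(m^2)$. The reorthogonalization step is the one point where the argument needs care: triggered only when some $|\omega_{m+1,j}|>\sqrt{\varepsilon}$, it is performed against the batch $B_m=\{j:|\omega_{m+1,j}|>\varepsilon^{3/4}\}$ at cost $\mathcal{O}(N|B_m|)$. Following \cite{Lanczos_partial}, the partial-orthogonalization design ensures that after each reorthogonalization the $\omega$'s are reset to $\mathcal{O}(\varepsilon)$, so the total size $\sum_m |B_m|$ is bounded by $\mathcal{O}(m\log(1/\varepsilon))$, which for fixed target precision $\varepsilon$ is $\mathcal{O}(m)$. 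Hence the cumulative reorthogonalization cost is $\mathcal{O}(mN)$, subsumed by $\mathcal{O}(mN\log N)$.

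Finally, the convergence test via Lemma \ref{THM_coneigencalues_convergence} requires a Takagi factorization of the tridiagonal matrix $T_m$. Using the specialized tridiagonal Takagi algorithms of \cite{Luk98afast,twisted_takagi,Fast_SSVD} this costs $\mathcal{O}(m^2)$ per invocation; since the test is done only when loss of orthogonality is indicated, and since standard amortization arguments limit the number of such tests, this contributes at most $\mathcal{O}(m^2)$ overall. Once the iteration terminates, the $k$ con-eigenvectors of $A$ are recovered from those of $T_m$ by the multiplication $Q_m u_j$, at cost $\mathcal{O}(kmN)\subseteq\mathcal{O}(mN\log N)$ since $k\le m$. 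Adding all contributions gives $\mathcal{O}(mN\log N+m^2)$, as claimed.

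The main obstacle is the bound on total reorthogonalization work; the clean statement is contingent on the heuristic argument that the partial-orthogonalization thresholds keep $\sum_m|B_m|$ of the same order as $m$ for a fixed accuracy $\varepsilon$. All other parts of the proof are routine arithmetic counting.
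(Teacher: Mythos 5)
Your accounting is essentially the argument the paper has in mind, and in fact substantially more detailed than what the paper offers: the paper gives no formal proof, only the preceding one-sentence remark that the matrix-vector product $Aq_m$ costs $\mathcal{O}(N\log N)$ by Proposition~\ref{fast_hankel_proposition}, with the $\omega$-bookkeeping, reorthogonalization cost, Takagi factorization of $T_m$, and final recovery of con-eigenvectors left implicit. You are right to flag the reorthogonalization bound as heuristic; the paper never confronts this at all (in the worst case $\sum_m|B_m|$ could be $\mathcal{O}(m^2)$, yielding $\mathcal{O}(m^2N)$ for that term, so the stated complexity does lean on the empirical scarcity of reorthogonalization events, just as in \cite{Lanczos_partial}). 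One small slip in your write-up: the containment $\mathcal{O}(kmN)\subseteq\mathcal{O}(mN\log N)$ for the recovery step $Q_mu_j$ does not follow from $k\le m$; it requires $k=\mathcal{O}(\log N)$. That assumption is implicit in the paper (and in the phrase ``$m$ is typically of the same order as $k$'', treating $k$ as a fixed small target rank), but the reason you gave is not the correct one. Apart from that, the proposal is a faithful and more careful version of the paper's unstated justification.
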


\thispagestyle{empty}
\section{Numerical simulations}
\subsection{Performance analysis}
In this section we compare the performance of our approach against the ESPRIT and root-MUSIC methods. We simulate functions of the form $f= f_0 + n$, where
$$
f_0(l) = \sum_{p=1}^{k} c_p e^{\zeta_p l},
$$
and where $n$ is a noise component. The coefficients $c_p$ are chosen as complex normal distributed variables, and the nodes as $\zeta_p= 1/(4N+1) (50 Z^r_p + i Z^i_p)$, where $Z^r_p$ and $Z^i_p$ are normally distributed.

The noise component is constructed by letting $\tilde{n}(k)= n_r(k) + i n_i(k)$, where $n_r$ and $n_i$ are normally distributed noise, and where
$$
n = \sqrt{ \frac{\| f_0\|^2}{\| \tilde{n}\|^2} 10^{-\mathrm{SNR}/10}} \tilde{n},
$$
for some \emph{signal to noise} parameter $SNR$. By this
construction, the signal to noise ratio will be exactly equal to the
parameter $SNR$ when measured in dB. Throughout the tests, we have
chosen to work with a signal length of 511, i.e. $N=256$. This is
chosen to make the FFT routines run fast. All simulations have
been run in a MATLAB environment, without any compiled
optimizations. For the ESPRIT and root-MUSIC, we have used the
routines provided in \cite{Stoica_moses}, with minor modifications
to make them work for the case $\mathrm{Re}(\zeta_p) \ne 0$.
The accuracy parameter $\epsilon$ used in the alternating projection
method has been chosen to be a factor 100 lower than the noise
magnitude.

In figure \ref{first_figure}, we show some simulation results for
the different methods.  We conduct a small number of simulations for
$SNR=10~dB$ and $k=10$, and consider the performance in terms of the
errors generated by the different methods.
\begin{figure}[ht]
\centering
\includegraphics[width=0.48\linewidth]{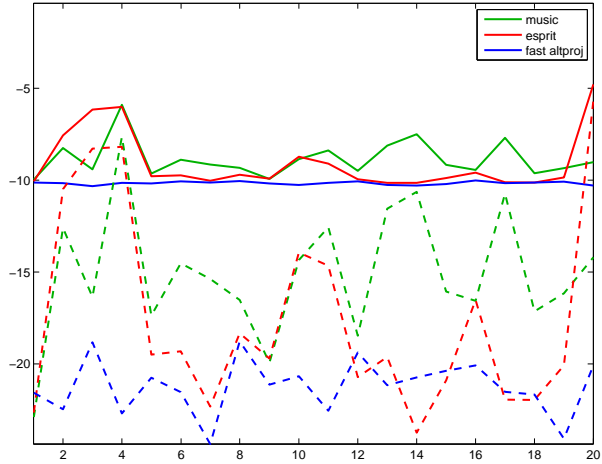}
\caption{The solid curves show the error $\| g-f\|/\|f\|$ with $g$
obtained with MUSIC (green), ESPRIT (red) and our propose method
(blue), respectively,  for SNR=10. The dashed ones show the
counterparts for  $\| g-f_0\|/\|f_0\|$.} \label{first_figure}
\end{figure}
We display the errors are displayed in two ways; in relation to the pure signal $f$ and in relation to the noise one $f_0$.

We see that our proposed method systematically has a smaller error in
both ways of measurement. We also note that for all methods we have
a substantially smaller error when compared to the pure signal $f_0$
instead of the noisy one. Hence, all three methods successfully
filter out a large part of the noise. It is also notable how close
the error in relation to $f$ is to the signal to noise ratio for our
proposed method. This is also implied by Theorem \ref{main}.
Basically, in the notation of Section \ref{altproj}, we have
$g=g_0$, and it is reasonable to assume that $f_0\approx g_{opt}$.
This is because the noise has a high probability of being orthogonal
to $f_0$, and that $P_{\M^n\cap\H}$ locally acts as an orthogonal
projection, (which is further elaborated on in \cite{altprojection}). Thus,
Figure \ref{first_figure} can be interpreted as the upper blue line
shows $\|f-g_{opt}\|/\|f\|$, whereas the lower blue line gives an
indication of the size of $\|g_\infty-g_{opt}\|/\|f\|$. In terms of
Theorem \ref{main} with $A=Hf_0$ of norm 1 and $s=0.1$, this means
that we can pick $\epsilon$ around $0.1$ as well. Although the above
images are constructed using standard $\ell^2$-norm, not the weighted one
required for Theorem \ref{main} to kick in, it is interesting to
observe that this is in line with the observations in \cite{altprojection}.
There, using more carefully conducted examples to test Theorem
\ref{main}, it seems that one can take $s\approx\epsilon$ when
working with $k\approx 10$.

It is interesting to see how these result depend on the different
parameters, i.e., the number of nodes $k$ and the noise level $SNR$.
In Figure \ref{fig:n_dependence} we have conducted more thorough
investigations. For each $k=1, 2, \dots, 30$ we have done 100
simulations and computed average results. The averaging has been
made in dB, in order to limit the effect of outlier results.
\begin{figure}[ht]
\centering
\includegraphics[width=0.48\linewidth]{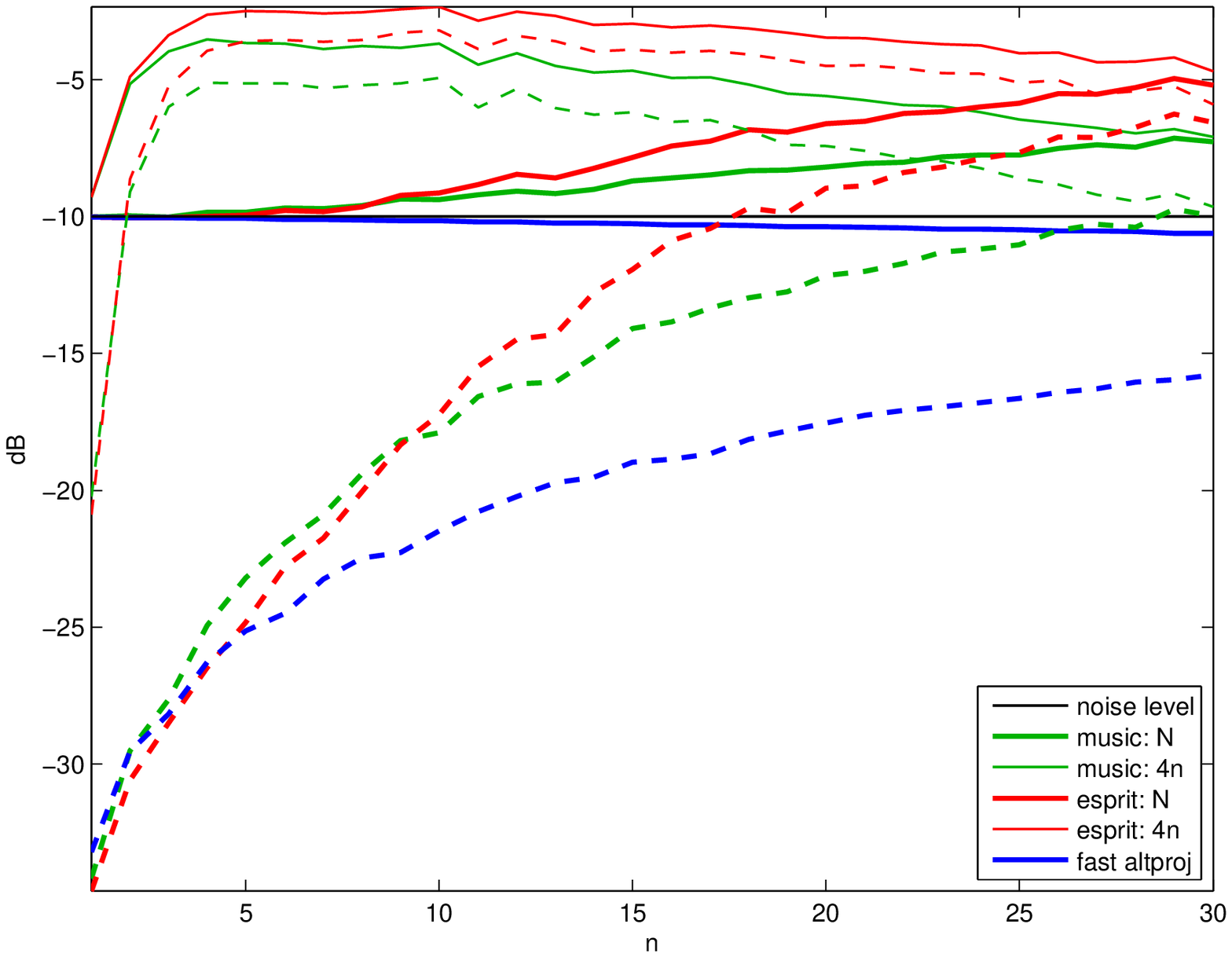}
\includegraphics[width=0.48\linewidth]{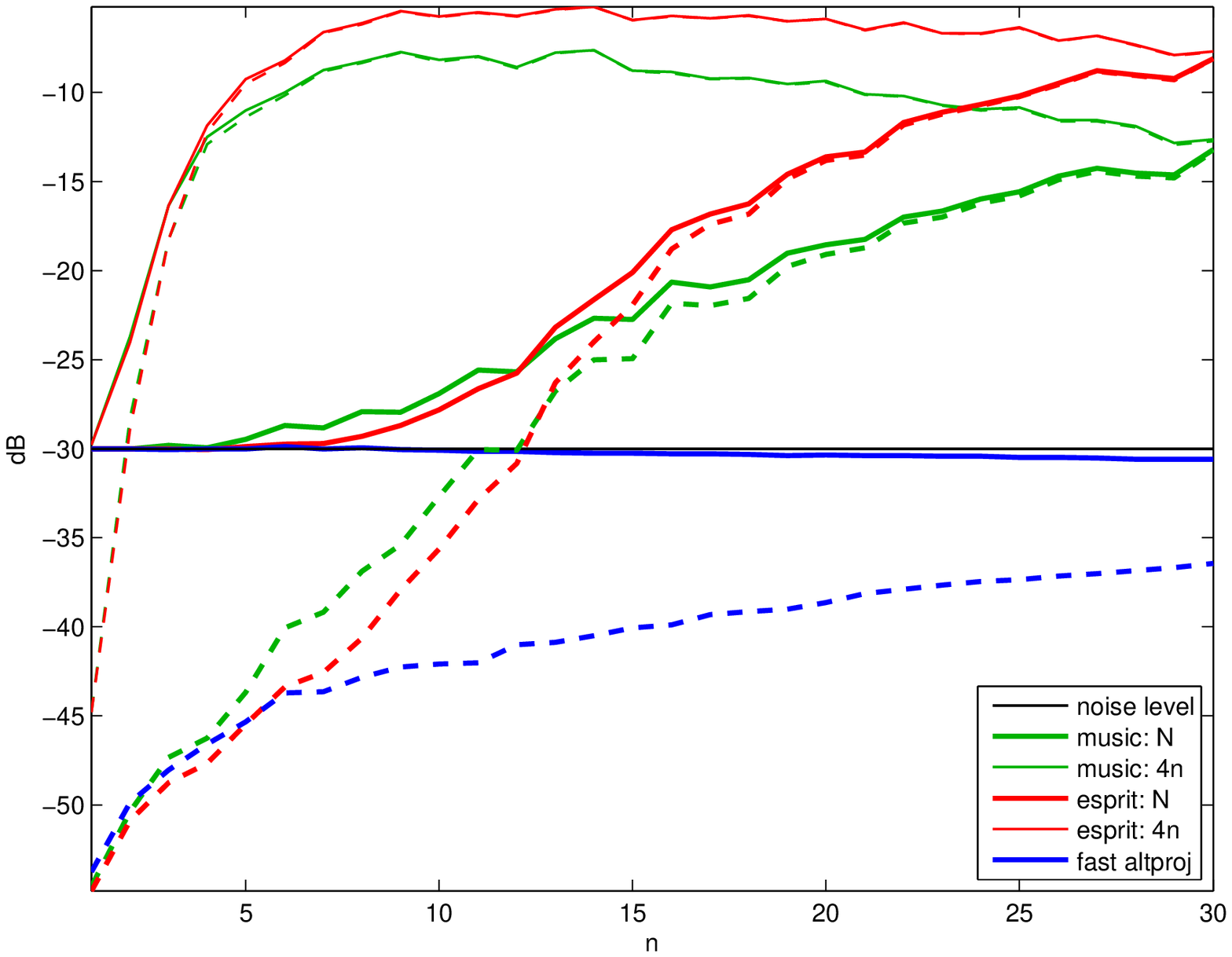}
\caption{The left panel shows results for SNR=10, and the right one results for SNR=30. The solid curves show the dependence on $k$ of the average error $\| g-f\|/\|f\|$ with g obtained with MUSIC (green), ESPRIT (red) and our propose method (blue), respectively, for SNR=10 . The dashed ones show the counterparts for  $\| g-f_0\|/\|f_0\|$. The average for each $k$ is done over 100 simulations. The thin lines show the results obtained for MUSIC and ESPRIT for $M = 4k $, whereas the thick lines show results for $M=N$. }
\label{fig:n_dependence}
\end{figure}
As for Figure \ref{first_figure}, we display errors in two ways,
using solid lines for errors in comparison to the noise signal $f$
and dashed lines for the comparison to the original one, $f_0$. We
also display some of the impact that the choice of size ($M$) of $R$ in
(\ref{sample_covariance}) has. The thin lines in red and green show
errors for $M=4k$ and the thick lines show the counterpart for
$M=N=256$.

There are a few interesting conclusions that can be drawn from the
results depicted in Figure \ref{fig:n_dependence}. First, we note
that for the cases were $k$ is small, all three methods perform
comparably well, given that the size ($M$) of the sample covariance
matrix used in MUSIC and ESPRIT is sufficiently large. However, as
$k$ increases, the alternating projection method starts to outperfom
the other two. We can again note that the errors (compared to $f$)
produced by the alternating projection method almost coincide with
the signal to noise ratio. Moreover, in terms of Theorem
\ref{main}, Figure \ref{fig:n_dependence} seems to indicate that
$\epsilon\approx s$ is a good rule of thumb, although the ratio gets
slightly worse as the complexity of the manifold $\M^n\cap\H$
increases with increasing $k$.

From the results we have seen so far we can conclude that the
alternating projection method should be the method of choice unless
$k$ is very small, given that the prime concern is to minimize the
estimation errors. The other criterion for method selection is speed.
The computational times for the different methods is displayed in
Figure \ref{fig_time}.
\begin{figure}[ht]
\centering
\includegraphics[width=0.48\linewidth]{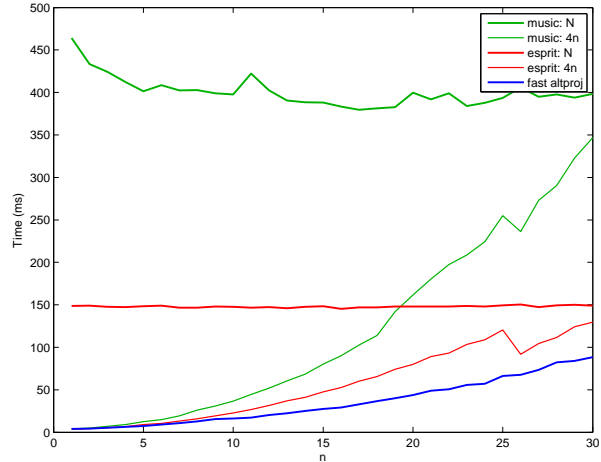}
\caption{Average execution time for the different methods in milliseconds. The line notation as in Figure \ref{fig:n_dependence} is used. \label{fig:time}}
\end{figure}
As mentioned before, the MUSIC and ESPRIT algorithms that are used
are slightly modified versions of the ones given in
\cite{Stoica_moses}. In Figure \ref{fig:time} the fast alternating
projection method is the fastest. The MUSIC and ESPRIT algorithms
are substantially slower for high $M$. For the MUSIC algorithm, the
most time consuming step is the root solving step. We note, however,
that by using our fast method for finding the first $k$
con-eigenvector / con-eigenvalues, we can construct a method that
would have much resemblance with the ESPRIT method as described in
\cite{ESPRIT}. It seems advantageous to work directly with
Hankel matrices rather than the covariance sample matrix of
(\ref{sample_covariance}). Using such an approach, we would be able
to construct a computational method that would provide similar
results as the ESPRIT algorithm described in \cite{Stoica_moses},
but substantially faster than the one based on the sample covariance
matrix. From the results in Figure \ref{fig:n_dependence} we can
conclude that the results would not be as good as the ones obtained
by the fast alternating projection method proposed here. However, it
would be faster, as it would only involve one decomposition step. In
other words, it would be equivalent to using the alternating
projection scheme with only one iteration.

A natural question would then be how much faster such ``fast''
ESPRIT algorithm would be. A first guess would be that the speed
ratio would be proportional to the number of alternating projections
performed before the target accuracy $\epsilon$ is reached. It turns out
that the fast alternating projection method is faster than that. The
reason for this is that fewer Lanczos iterations are required in
each alternating projection iteration.
\begin{figure}[ht]
\centering
\includegraphics[width=0.48\linewidth]{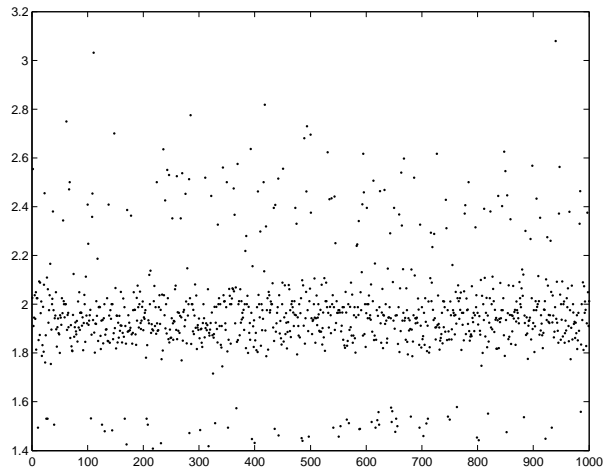}
\caption{The ratio between the time for total number of iterations
compared to the first one for the alternating projection method for
1000 simulations}. \label{fig_time}
\end{figure}
In Figure \ref{fig_time} we display the ratio between the
total time and the time for the first iteration for the alternating
projection method. We see that the ratio typically lies around 2.
This means that the fast alternating projection method would only be
about twice as expensive as a fast implementation of ESPRIT, while
providing smaller errors. Again, we note that the proposed fast
alternating projection method is substantially faster than the
standard implementation of ESPRIT and MUSIC.

\subsection{Approximations with Gaussians}
As a final example, we show some results concerning the
approximation of functions with Gaussians with fixed half-width,
using the fast alternating projection method with Gaussian weights.
There are two possible interesting cases. The first one concerns the
case where the functions are of the form
\begin{equation} \label{gauss_rep}
\sum_p c_p e^{-\alpha (x-x_p)^2 + i \xi_p x},
\end{equation}
with fixed (and known) constant $\alpha$. The second case concerns the approximation of functions using a Gaussian window, for example as done in time--frequency analysis. Using a non-linear approach may be beneficial compared to short-time Fourier transform representations with overlapping windows. However, we will in this section only show some results concerning (\ref{gauss_rep}).

In Figure \ref{fig_gauss} we show the result from one simulation using a function of the form (\ref{gauss_rep}), using $10$ Gaussians. In order to approximate this function using exponentials, we choose the weights $w$ such that $\sqrt{\omega_l}$ approximates $e^{-\alpha l/2N}$, $l=-2N,\dots, 2N$. To this end, we choose
$$
w_k = \sqrt{\frac{2}{4N+1}  \sqrt{ \frac{8 \alpha}{\pi}}}e^{-4 \alpha k/{4N}}, \quad k=-N,\dots,N.
$$
For sufficiently narrow Gaussians (large $\alpha$), we will then have that $\sqrt{\omega_l} \approx e^{-\alpha l/2N}$.
\begin{figure}[ht]
\centering
\includegraphics[width=0.68\linewidth]{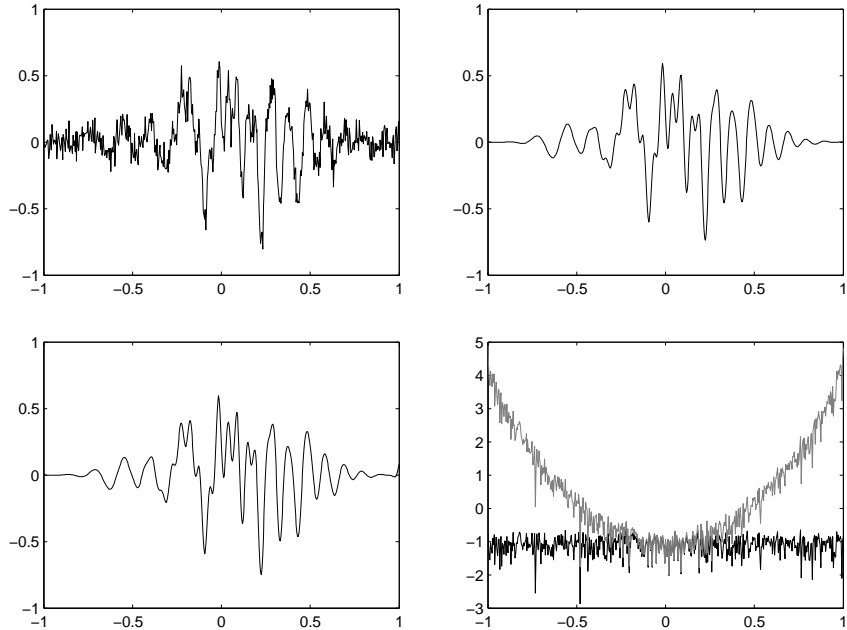}
\caption{In the top left panel the noisy signal $f$ is shown, the top right shows the original $f_0$, the bottom left shows the reconstruction, and the bottom right shows errors. The errors are shown unscaled in gray, and scaled with respect to $\sqrt{\omega}$ in black.}.
\label{fig_gauss}
\end{figure}
Just as before we let $f_0$ be of the form  (\ref{gauss_rep}) and use additive noise to obtain $f$. One simulation is shown in Figure \ref{fig_gauss} for $SNR=10$. Before we start the alternating projection scheme, we divide $f$ pointwise with $1/\sqrt{\omega_l}$. This will boost the amplitude at the endpoints of $f$ substantially, but since we approximate using $\omega_l$ as a weight, we will obtain a uniform approximation. The noise will, however, not be uniform with this approach, but larger at the end-points.

The result from one simulation is shown in Figure \ref{fig_gauss}. The noise signal is depicted in the top left panel, while the original is displayed in the top right panel. In the bottom left we see the obtained reconstruction. We can see that most of the features from the original signal is captured. In the bottom right panel we show pointwise errors; in black the error weighted with $\omega_l$ and in grey the unweighted pointwise error.

\section{Conclusions}
We have developed a method for the fast estimation of complex frequencies using an alternating projection scheme between Hankel matrices and rank $k$ Takagi representations. The method has a time complexity of $\mathcal{O}(n N\log N + n^3)$. FFT routines are used both to get fast matrix--vector multiplications, and to project rank $k$ representations to Hankel matrices. In order to compute the first $k$ Takagi vectors, we employ a modified Lanczos scheme for self-adjoint matrices.
The number of necessary alternating projection steps depends on an accuracy parameter, but in typical situations the total time is only twice as large as the time needed for the first iteration. The reason for this is that fewer Lanczos steps are needed when the matrix we obtain is closer to being both Hankel, and rank $k$.

In our simulations we see that the proposed method performs better both with regards to speed and approximation accuracy, compared to standard implementations like root-MUSIC and ESPRIT. We also verify that the errors that we obtain behave in the manner theoretically predicted in \cite{altprojection}. The method works also for some weighted representations. A particular case of weights that can be used are Gaussian weights, for which case some numerical examples are provided.

\section{Acknowledgements}
This work was supported by the Swedish Research Council and the Swedish Foundation for International Cooperation in Research and Higher Education.

\section{Appendix; the set of tangential points in $\H_k$ is thin}\label{appendix}
Following the terminology of \cite{altprojection}, a point $A\in\H_k$ is called
regular if the dimension of $\r_k$ $\H_k$ and $\H$ are constant in a
neighborhood of $A$. Thus theorem \ref{tKronecker} says that the set
of non-regular points is thin. Moreover, recall that a point $A\in
\H_k$ is called non-tangential if \begin{equation}\label{eq_angle1}
T_{\r_k}(A)\cap T_{\H}(A)=T_{\r_k\cap\H}.\end{equation} In order to
prove Theorem \ref{main}, we need to show that the set of tangential
points in $\H_k$ is thin, and then apply Theorem 6.1 of
\cite{altprojection}.

\begin{theorem}\label{t3}
The set of tangential points is thin in $\H_k$.
\end{theorem}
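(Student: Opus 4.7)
The plan is to reformulate tangentiality at $A\in\H_{N,k}^n$ as a rank drop for an analytically varying linear map, exhibit a single non-tangential point, and conclude by real-analytic (in fact semi-algebraic) semicontinuity. By Theorem \ref{tKronecker}, $\H_{N,k}\setminus\H_{N,k}^n$ is already thin, so it suffices to work inside the complex manifold $\H_{N,k}^n$ of complex dimension $2k$ (real dimension $4k$). On it, $T_{\H_N}(A)=\H_N$ is a fixed linear space, and the rank-$k$ tangent space at the complex symmetric matrix $A$ is
$$T_{\r_{N,k}}(A)=\{X\in\m_{N,N}:(I-P_A)\,X\,(I-\overline{P_A})=0\},$$
where $P_A$ denotes the orthogonal projection onto $\mathrm{Ran}\,A=\mathrm{span}(Z_1,\ldots,Z_k)$ with $Z_p=(1,e^{\zeta_p},\ldots,e^{(N-1)\zeta_p})^T$. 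After fixing orthonormal bases for the ranges of $I-P_A$ and $I-\overline{P_A}$, introduce the $\C$-linear map
$$\Lambda_A\colon \C^{2N-1}\to\m_{N-k,N-k},\qquad \Lambda_A(g)=(I-P_A)(Hg)(I-\overline{P_A}).$$
Its kernel corresponds to $T_{\r_{N,k}}(A)\cap T_{\H_N}(A)$; since $T_{\H_{N,k}^n}(A)$ always injects into this kernel and has complex dimension $2k$, non-tangentiality at $A$ is equivalent to $\dim_{\C}\ker\Lambda_A=2k$, i.e.\ to $\Lambda_A$ attaining its maximum possible rank $2N-1-2k$.

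In the local parameters $(c_1,\ldots,c_k,\zeta_1,\ldots,\zeta_k)$, the entries of $\Lambda_A$ depend real-analytically on the parameters (polynomially in $c_p,\overline{c_p},w_p,\overline{w_p}$ after the substitution $w_p=e^{\zeta_p}$). Tangentiality is thus the simultaneous vanishing of all $(2N-1-2k)$-minors of $\Lambda_A$, a closed semi-algebraic condition on $\H_{N,k}^n$. Provided this condition fails at a single point, the tangential locus is a proper semi-algebraic subvariety of $\H_{N,k}^n$, and by standard semi-algebraic stratification it is a finite union of smooth submanifolds of real dimension strictly less than $4k$ — hence thin relatively to $\H_{N,k}^n$, as required.

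The main obstacle is to exhibit a single non-tangential point. I would take $A_0=\sum_{p=1}^{k} c_p Z_p Z_p^T$ with $\zeta_1,\ldots,\zeta_k$ chosen so that the $e^{\zeta_p}$ are real and widely separated (for instance $e^{\zeta_p}=p$) and $c_p\neq 0$, and then analyze $\Lambda_{A_0}$ directly. Completing $Z_1,\ldots,Z_k$ to an orthonormal basis of $\C^N$ and writing the condition $\Lambda_{A_0}(g)=0$ in this basis yields a linear system on the $2N-1$ entries of $g$ whose coefficient matrix has entries given by sums along anti-diagonals of products of the orthogonalised complements of the $Z_p$. The task is to show that the only solutions are the $2k$-parameter family $g(j)=\sum_{p}(\dot c_p+c_p\,j\,\dot\zeta_p)e^{\zeta_p j}$ coming from $T_{\H_{N,k}^n}(A_0)$, which reduces to verifying full rank $2N-1-2k$ for a matrix whose blocks are (confluent) Vandermonde matrices in the $e^{\zeta_p}$. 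This Vandermonde-type non-degeneracy — which requires estimating a determinant assembled from $2k$ sequences $(e^{\zeta_p j})_{j}$ and $(j e^{\zeta_p j})_{j}$ against the complementary block — is the technically delicate step; once established, non-tangentiality at $A_0$ follows and, combined with the semi-algebraic semicontinuity above, completes the proof.
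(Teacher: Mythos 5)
Your overall strategy matches the paper's: reformulate non-tangentiality as a polynomial/analytic rank condition on the local parameters, observe that a single non-tangential point forces the tangential locus to be a proper analytic subvariety (hence thin), and then exhibit such a point. The paper also reduces to a determinantal rank statement (\ref{flis}) and argues exactly as you do in paragraph~(i) that one good point suffices. The difference in formulation — you use the dual characterization $T_{\r_{N,k}}(A)=\{X:(I-P_A)X(I-\overline{P_A})=0\}$ and the map $\Lambda_A$, while the paper parametrizes the tangent space via $\partial\mathfrak{W}$ and Vandermonde factors $\mathfrak{U},\mathfrak{V}$ — is a legitimate and natural alternative and would carry through. (One small inaccuracy: the entries of $\Lambda_A$ are rational, not polynomial, in $c_p,\overline{c_p},w_p,\overline{w_p}$, because $P_A=Z(Z^*Z)^{-1}Z^*$ involves an inverse; this is harmless since the determinant of $Z^*Z$ is nonvanishing on $\H^n_{N,k}$, but you should clear denominators before invoking algebraic genericity.)

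The genuine gap is that the crucial step — exhibiting one non-tangential point — is not carried out. You yourself flag it as "the technically delicate step" and leave it as a plan: a full-rank verification for a confluent-Vandermonde block matrix in the separated-node configuration. This is precisely the computation that the paper's proof is engineered to avoid. The paper's point~(ii) shows that one need not test at a point of $\H^n_{N,k}$ at all: it suffices to find \emph{any} $B=\mathfrak{W}(U_B,V_B)\in\H_k$ with $\Rank\partial\mathfrak{W}(U_B,V_B)=2kn-k^2$ at which the adjoined matrix has the required rank, and then density of $\H^n_{N,k}$ together with openness of the rank condition transports the conclusion back to $\H^n_{N,k}$. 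This lets them pick the degenerate $B$ whose $U$- and $V$-derivatives visibly span the first $k$ rows and first $k$ columns, so that the count of which Hankel basis elements $E_j$ fall inside the span can be read off directly (your Figure~7 picture), with no Vandermonde determinant estimate at all. Without either carrying out your confluent-Vandermonde computation or importing a shortcut like the paper's degenerate test point, the proposal does not yet constitute a proof; its conclusion rests on an unverified nondegeneracy claim.
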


\begin{proof} By Theorem \ref{tKronecker} we immediately get that all points in
$\H_k^n$ are regular and that $\H\setminus\H_k^n$ is thin. To verify
that $A\in\H_k^n$ is non-tangential, it thus suffices to establish
(\ref{eq_angle1}), e.g. that $T_{\H}(A)\cap T_{\r_k^d}(A) =
T_{\H_k^n}(A)$, since $\H_k^n\subset\r_k^d$. Clearly
\begin{equation}\label{fis} T_{\H}(A)\cap T_{\r_k^d}(A) \supset
T_{\H_k^n}(A).\end{equation} By Theorem \ref{tKronecker} and the
fact that $A$ is regular we have $\dim(T_{\H_k^n}(A))=4k$ and
\begin{align*} &\dim(T_{\H}(A)\cap T_{\r_k^d}(A))=\dim(T_{\H}(A))+\dim( T_{\r_k^d}(A))-\dim(T_{\H}(A)+
T_{\r_k^d}(A))=\\&= 2(2n-1)+2(2kn-k^2)-\dim(T_{\H}(A)+
T_{\r_k^d}(A)).\end{align*} To establish the reverse inclusion to
(\ref{fis}), it thus suffices to show that $\dim(T_{\H}(A)\cap
T_{\r_k^d}(A))\leq 4k$, or equivalently
\begin{equation*} \dim(T_{\H}(A)+T_{\r_k^d}(A))\geq 2(2n-1)+2(2kn-k^2)-4k.\end{equation*}
Moreover, since both subspaces are closed under multiplication by
$\C$, it suffices to verify \begin{equation}\label{mut}
\dim_\C(T_{\H}(A)+T_{\r_k^d}(A))\geq 2n-1+2kn-k^2-2k,\end{equation}
where $\dim_\C$ denotes the dimension over $\C$. To this end, note
that the map $\mathfrak{W}:(\m_{n,k})^2\rightarrow\m_{n,n}$ given by
$$\mathfrak{W}(U,V)=VU^*=\sum_{j=1}^k {v}_j{u}_j^*,$$ (where ${u}_j,~{v}_j$ denote the columns of $U$ and $V$
respectively), is an immersion onto $\r_k$. By this we mean that for
each $A\in\r_k$ there exists $U_A,~V_A$ such that
$A=\mathfrak{W}(U_A,V_A)$ and, if $A\in\r_k^d$, then
$$T_{\r_k^d}(A)=\Ran \partial\mathfrak{W},$$
where $\partial\mathfrak{W}$ denotes the derivative of
$\mathfrak{W}$. In this section we define
$\mathfrak{U}:\C^n\rightarrow\m_{n,k}$ and
$\mathfrak{V}:\C^n\times\C^n\rightarrow\m_{n,k}$ via
$$\mathfrak{U}(\alpha)=\left(
                         \begin{array}{ccc}
                           1 & \cdots & 1 \\
                           \alpha_1 & \cdots & \alpha_k \\
                           \alpha_1^2 & \cdots & \alpha_k^2 \\
                           \vdots & \vdots & \vdots \\
                           \alpha_1^n & \cdots & \alpha_k^n\\
                         \end{array}
                       \right),\quad\quad \mathfrak{V}(c,\alpha)=\left(
                         \begin{array}{ccc}
                           c_1 & \cdots & c_k \\
                           c_1\alpha_1 & \cdots & c_k\alpha_k \\
                           c_1\alpha_1^2 & \cdots & c_k\alpha_k^2 \\
                           \vdots & \vdots & \vdots \\
                           c_1\alpha_1^n & \cdots & c_k\alpha_k^n\\
                         \end{array}
                       \right).
$$
It is easily seen that, given any $\alpha\in\C$, the matrix
\begin{equation}\label{hankelrank} H(\alpha)=\left(
                                            \begin{array}{ccccc}
                                              1 & \alpha & \alpha^2 & \cdots & \alpha^{N-1} \\
                                              \alpha & \alpha^2 & \iddots & \alpha^{N-1} & \alpha^N \\
                                              \alpha^2 & \iddots & \iddots & \iddots & \vdots \\
                                              \vdots & \alpha^{N-1} & \iddots & \iddots & \alpha^{2N-3} \\
                                              \alpha^{N-1} & \alpha^N & \cdots & \alpha^{2N-3} & \alpha^{2N-2} \\
                                            \end{array}
                                          \right)
\end{equation}
defines a rank 1 Hankel matrix. Thus
\begin{equation}\label{H}\mathfrak{H}(c,\alpha)=\sum_{j=1}^k c_jH(\alpha_j)\end{equation} is a rank $k$ Hankel matrix. It is clear that
$$\mathfrak{H}(c,\alpha)=\mathfrak{W}(\mathfrak{U}(\alpha),\mathfrak{V}(c,\alpha)).$$
 Thus, whenever $A=\mathfrak{H}(c,\alpha)\in\H_k^n$, we have
\begin{equation}\label{apanson} T_{\r_k^d}(A)=\Ran
\partial\mathfrak{W}(\mathfrak{U}(\alpha),\mathfrak{V}(c,\alpha)).\end{equation}
Now, it is not hard to see that
$\partial\mathfrak{W}(\mathfrak{U}(\alpha),\mathfrak{V}(c,\alpha))$
is a polynomial in the variables $c$ and $\alpha$. To visualize, say
that $n=3$ and $k=2$. Then the right hand side is given as the span
of the 12 matrices  $$\left(
                   \begin{array}{ccc}
                     1 & \alpha_j & \alpha_j^2 \\
                     0 & 0 & 0 \\
                     0 & 0 & 0 \\
                   \end{array}
                 \right),\left(
                   \begin{array}{ccc}
                     0 & 0 & 0 \\
                     1 & \alpha_j & \alpha_j^2 \\
                     0 & 0 & 0 \\
                   \end{array}\right),\left(
                   \begin{array}{ccc}
                     0 & 0 & 0 \\
                                          0 & 0 & 0 \\
                     1 & \alpha_j & \alpha_j^2 \\
                   \end{array}
                 \right),\quad\quad (j=1,2),$$ and $$\left(
                                   \begin{array}{ccc}
                                     c_j & 0 & 0 \\
                                     c_j\alpha_j & 0 & 0 \\
                                     c_j\alpha_j^2 & 0 & 0 \\
                                   \end{array}
                                 \right),\left(
                                   \begin{array}{ccc}
                                     0 & c_j & 0 \\
                                     0 & c_j\alpha_j & 0 \\
                                     0 & c_j\alpha_j^2 & 0 \\
                                   \end{array}
                                 \right),\left(
                                   \begin{array}{ccc}
                                     0 & 0 & c_j \\
                                     0 & 0 & c_j\alpha_j \\
                                     0 & 0 & c_j\alpha_j^2\\
                                   \end{array}
                                 \right),\quad\quad (j=1,2).$$ Moreover, picking a basis for $\m_{n,n}$ (for example, the
standard one which we order lexicographically), the right hand side
of (\ref{apanson}) can be identified with the range of a matrix with
polynomial entries which we denote by
$\widetilde{\partial\mathfrak{W}}(\mathfrak{U}(\alpha),\mathfrak{V}(c,\alpha))$.
To continue the example, we get
\begin{equation}\label{muta} \widetilde{\partial\mathfrak{W}}(\cdots)=
\left(
  \begin{array}{cccccccccccc}
    1 & 0 & 0 & 1 & 0 & 0 &                   c_1 & 0 & 0 & c_2 & 0 & 0 \\
    0 & 1 & 0 & 0 & 1 & 0 &                   c_1\alpha_1 & 0 & 0 & c_2\alpha_2 & 0 & 0 \\
    0 & 0 & 1 & 0 & 0 & 1 &                   c_1\alpha_1^2 & 0 & 0 & c_2\alpha_2^2 & 0 & 0 \\
    \alpha_1 & 0 & 0 & \alpha_2 & 0 & 0 &     0 & c_1 & 0 & 0 & c_2 & 0 \\
    0 & \alpha_1 & 0 & 0 & \alpha_2 & 0 &     0 & c_1\alpha_1 & 0 & 0 & c_2\alpha_2 & 0 \\
    0 & 0 & \alpha_1 & 0 & 0 & \alpha_2 &     0 & c_1\alpha_1^2 & 0 & 0 & c_2\alpha_2^2 & 0 \\
    \alpha_1^2 & 0 & 0 & \alpha_2^2 & 0 & 0 & 0 & 0 & c_1 & 0 & 0 & c_2 \\
    0 & \alpha_1^2 & 0 & 0 & \alpha_2^2 & 0 & 0 & 0 & c_1\alpha_1 & 0 & 0 & c_2\alpha_2 \\
    0 & 0 & \alpha_1^2 & 0 & 0 & \alpha_2^2 & 0 & 0 & c_1\alpha_1^2 & 0 & 0 & c_2\alpha_2^2 \\
  \end{array}
\right)
\end{equation}
With \begin{equation*} E_j(m,l) = \left\{
          \begin{array}{ll}
            1, & \hbox{if $m+l=j$;} \\
            0, & \hbox{otherwise.}
          \end{array}
        \right.,
\end{equation*}
$\tilde{\H}$ is spanned by
$\widetilde{E}_1,\ldots,\widetilde{E}_{2n-1}$, where the notation is
self-explanatory. In our example we get
\begin{equation}\label{muta1}\tilde{\H}=\Ran \left(
                              \begin{array}{ccccc}
                                1 & 0 & 0 & 0 & 0 \\
                                0 & 1 & 0 & 0 & 0 \\
                                0 & 0 & 1 & 0 & 0 \\
                                0 & 1 & 0 & 0 & 0 \\
                                0 & 0 & 1 & 0 & 0 \\
                                0 & 0 & 0 & 1 & 0 \\
                                0 & 0 & 1 & 0 & 0 \\
                                0 & 0 & 0 & 1 & 0 \\
                                0 & 0 & 0 & 0 & 1 \\
                              \end{array}
                            \right)
\end{equation}
Let us denote the matrix obtained by adjoining
$\widetilde{\partial\mathfrak{W}}(\cdots)$ and $\widetilde{\H}$ by
$[\widetilde{\partial\mathfrak{W}}~\tilde{\H}]$. To verify
(\ref{mut}), it thus suffices to show that
\begin{equation}\label{flis}
\Rank[\widetilde{\partial\mathfrak{W}}~\widetilde{\H}] \geq
2n-1+2kn-k^2-2k,
\end{equation} holds, evaluated at $(\mathfrak{U}(\alpha),\mathfrak{V}(c,\alpha))$ for some $c,\alpha$ such that $\mathfrak{H}(c,\alpha)\in\H_k^n.$
Note that\begin{itemize}\item[$(i)$] If we can establish
(\ref{flis}) for one point $A=\mathfrak{H}(c,\alpha)$, then it
easily follows that (\ref{flis}) holds at all but a thin set of
points $A$. To see this, set $q=2n-1+2kn-k^2-2k$ and first note that
we can pick a $q\times q$ submatrix of
$[\widetilde{\partial\mathfrak{W}}~\tilde{\H}]$ whose determinant is
a non-zero polynomial. Thus, by standard algebraic geometry, the set
of points $(c,\alpha)$ where the determinant is zero is thin in
$\C^{2k}$. Finally, it is also clear that the image of a thin set
under a chart, in this case $\mathfrak{H}$, is again thin.
\item[$(ii)$] Let $B=\mathfrak{W}(U_B,V_B)\in\H_k$ be a point such that \begin{equation}
\label{g}\Rank{\partial\mathfrak{W}}(U_B,V_B)=2kn-k^2,\end{equation}
but where $(U_B,V_B)$ is not necessarily in the closure of the range
of $(\mathfrak{U},\mathfrak{V})$. We claim that in order to
establish $(i)$, it suffices to establish (\ref{flis}) at the point
$(U_B,V_B)$. To see this, first note that by (\ref{g}), $\r_k$ is
locally a manifold (of dimension $2kn-k^2$) around $B$, and we can
take an affine subspace of $\n\subset\m_{n,k}^2$ containing
$(U_B,V_B)$ such that $\mathfrak{W}|_{\n}$ becomes a local chart for
$\r_k$. If (\ref{flis}) holds for $(U_B,V_B)$, then arguing as above
with determinants, it holds in a neighborhood of $(U_B,V_B)$. By
Theorem \ref{tKronecker}, $\H_k^n$ is dense in $\H_k$, so in
particular we can pick a $C\in\H_k^n$ and corresponding
$U_C,~V_C\in\n$ and $~c_C, \alpha_C \in\C^n$ such that
$C=\mathfrak{W}(U_C,V_C)=\mathfrak{H}(c_C, \alpha_C)$ and
(\ref{flis}) is satisfied for
$[\widetilde{\partial\mathfrak{W}}(U_C,V_C)~\tilde{\H}]$. By
(\ref{apanson}) and (\ref{g}) we have
$$\Ran{\partial\mathfrak{W}}(U_C,V_C)=T_{\r_k^d}(C)=\Ran{\partial\mathfrak{W}}
(\mathfrak{U}(\alpha_C),\mathfrak{V}(c_C,\alpha_C)),$$ which shows
that (\ref{flis}) is satisfied at
$(\mathfrak{U}(\alpha_C),\mathfrak{V}(c_C,\alpha_C))$, as desired.
\end{itemize}
So, it remains to verify (\ref{flis}) and (\ref{g}) for some point
$B=\mathfrak{W}(U_B,V_B)\in\H_k$. In terms of our example, we pick
$$U_B=\left(
                                                \begin{array}{cc}
                                                  1 & 0 \\
                                                  0 & 1 \\
                                                  0 & 0 \\
                                                \end{array}
                                              \right),\quad \quad V_B=\left(
                                                \begin{array}{cc}
                                                  1 & 1 \\
                                                  1 & 0 \\
                                                  0 & 0 \\
                                                \end{array}
                                              \right)
$$
so that $B$ becomes the rank 2 Hankel operator $$B=\left(
                                                     \begin{array}{ccc}
                                                       1 & 1 & 0 \\
                                                       1 & 0 & 0 \\
                                                       0 & 0 & 0 \\
                                                     \end{array}
                                                   \right).
$$
Then $\partial\mathfrak{W}(U_B,V_B)$ is spanned by the 6
"$V$-derivatives";
$$\left(
                   \begin{array}{ccc}
                     1 & 0 & 0 \\
                     0 & 0 & 0 \\
                     0 & 0 & 0 \\
                   \end{array}
                 \right),\left(
                   \begin{array}{ccc}
                     0 & 0 & 0 \\
                     1 & 0 & 0 \\
                     0 & 0 & 0 \\
                   \end{array}
                 \right),\left(
                   \begin{array}{ccc}
                     0 & 0 & 0 \\
                     0 & 0 & 0 \\
                     1 & 0 & 0 \\
                   \end{array}
                 \right),\left(
                   \begin{array}{ccc}
                     0 & 1 & 0 \\
                     0 & 0 & 0 \\
                     0 & 0 & 0 \\
                   \end{array}
                 \right),\left(
                   \begin{array}{ccc}
                     0 & 0 & 0 \\
                     0 & 1 & 0 \\
                     0 & 0 & 0 \\
                   \end{array}
                 \right),\left(
                   \begin{array}{ccc}
                     0 & 0 & 0 \\
                     0 & 0 & 0 \\
                     0 & 1 & 0 \\
                   \end{array}
                 \right)$$
                 and the 6 "$U$-derivatives;
$$\left(
                   \begin{array}{ccc}
                     1 & 0 & 0 \\
                     1 & 0 & 0 \\
                     0 & 0 & 0 \\
                   \end{array}
                 \right),\left(
                   \begin{array}{ccc}
                     0 & 1 & 0 \\
                     0 & 1 & 0 \\
                     0 & 0 & 0 \\
                   \end{array}
                 \right),\left(
                   \begin{array}{ccc}
                     0 & 0 & 1 \\
                     0 & 0 & 1 \\
                     0 & 0 & 0 \\
                   \end{array}
                 \right),\left(
                   \begin{array}{ccc}
                     1 & 0 & 0 \\
                     0 & 0 & 0 \\
                     0 & 0 & 0 \\
                   \end{array}
                 \right),\left(
                   \begin{array}{ccc}
                     0 & 1 & 0 \\
                     0 & 0 & 0 \\
                     0 & 0 & 0 \\
                   \end{array}
                 \right),\left(
                   \begin{array}{ccc}
                     0 & 0 & 1 \\
                     0 & 0 & 0 \\
                     0 & 0 & 0 \\
                   \end{array}
                 \right)$$
This is clearly an 8-dimensional space not including $$\left(
                   \begin{array}{ccc}
                     0 & 0 & 0 \\
                     0 & 0 & 0 \\
                     0 & 0 & 1 \\
                   \end{array}
                 \right),$$ which happens to be $E_5$ in the basis for $\H$, and thus
\begin{equation}\label{flis1}
\Rank[\widetilde{\partial\mathfrak{W}}(U_B,V_B)~\widetilde{\H}]=9=
2*3-1+2*2*3-2^2-2*2,
\end{equation}
establishing (\ref{flis}) in this particular case.
\begin{figure}[tbh!]\label{fig7} \unitlength=1in
\begin{center}
\psfrag{E1}[][l]{$E_1$} \psfrag{E2}[][l]{$E_2$}
\psfrag{E2k-1}[][l]{$E_{2k-1}$} \psfrag{E2k}[][l]{$E_{2k}$}
\psfrag{E2k+1}[][l]{$E_{2k+1}$} \psfrag{En}[][l]{$E_{n}$}
\psfrag{E2n-1}[][l]{$E_{2n-1}$} \psfrag{nn}[][l]{$n$}
\psfrag{kk}[][l]{$k$}
\includegraphics[width=3.5in]{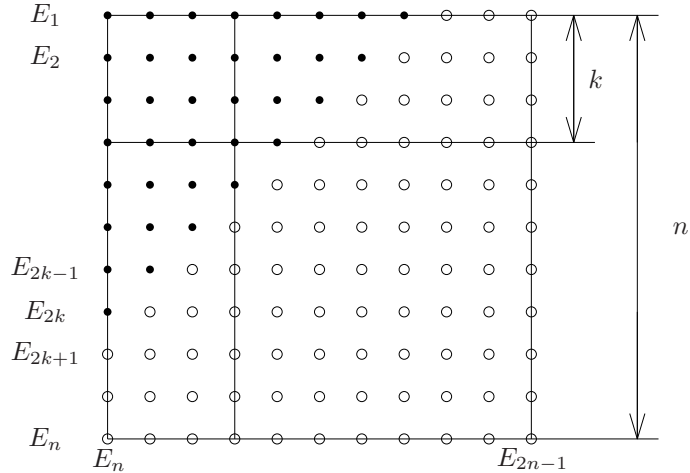}
\end{center}
\caption{Rank illustration. The filled dots represents the Hankel
basis elements which are included in
$\Ran{\partial\mathfrak{W}}(U_B,V_B)$.}
\end{figure}
The reason for working with this simple example, is that it is easy
to generalize the idea to arbitrary $k,n,$ but hard to write down
and we want to omit the details. Roughly, in the general case the
"$V$-derivatives" will span the first $k$ columns of $\m_{n,n}$,
whereas the $U$-derivatives will span the first $k$ rows. Thus
$\Rank{\partial\mathfrak{W}}(U_B,V_B)=2kn-k^2,$ as required in
(\ref{g}). Moreover, it is easy to see that $\{E_1,\ldots,E_{2k}\}$
is a subset of $\Ran{\partial\mathfrak{W}}(U_B,V_B)$, whereas
$\{E_{2k+1},\ldots,E_{2n-1}\}$ form a basis for a disjoint subspace,
(except for the point zero), see Fig \ref{fig7}. In general we thus
get
$$\Rank [{\partial\mathfrak{W}}(U_B,V_B),~~ T_\H]=2kn-k^2+2n-1-2k,$$
as desired.
\end{proof}

\bibliographystyle{plain}
\bibliography{freqest1d_altproj_bibliography}

\end{document}